\documentclass[12pt]{amsart}

\usepackage[a4paper,margin=1in]{geometry}
\usepackage[T1]{fontenc}
\usepackage{amsmath,amssymb,amsthm,mathtools}
\usepackage{microtype}
\usepackage{xcolor}
\definecolor{linkblue}{RGB}{0,82,155}
\usepackage[
  colorlinks=true,
  linkcolor=linkblue,
  citecolor=linkblue,
  urlcolor=linkblue
]{hyperref}
\usepackage[nameinlink,capitalise,noabbrev]{cleveref}
\usepackage{enumitem}
\usepackage{placeins}

\crefname{theorem}{Theorem}{theorems}
\Crefname{theorem}{Theorem}{Theorems}

\crefname{lemma}{lemma}{lemmas}
\Crefname{lemma}{Lemma}{Lemmas}

\crefname{corollary}{corollary}{corollaries}
\Crefname{corollary}{Corollary}{Corollaries}

\crefname{proposition}{proposition}{propositions}
\Crefname{proposition}{Proposition}{Propositions}

\crefname{remark}{remark}{remarks}
\Crefname{remark}{Remark}{Remarks}

\crefname{definition}{definition}{definitions}
\Crefname{definition}{Definition}{Definitions}

\selectfont
\setlist{itemsep=0.25em,topsep=0.4em}

\usepackage{aliascnt}

\newtheorem{theorem}{Theorem}[section]

\newaliascnt{lemma}{theorem}
\newtheorem{lemma}[lemma]{Lemma}
\aliascntresetthe{lemma}

\newaliascnt{corollary}{theorem}
\newtheorem{corollary}[corollary]{Corollary}
\aliascntresetthe{corollary}

\newaliascnt{proposition}{theorem}
\newtheorem{proposition}[proposition]{Proposition}
\aliascntresetthe{proposition}

\newtheoremstyle{myremark}%
  {6pt}% Space above
  {6pt}% Space below
  {\normalfont}% Body font
  {}% Indent amount
  {\bfseries}% Head font
  {.}% Punctuation after head
  {.5em}% Space after head
  {}% Head spec

\theoremstyle{myremark}
\newaliascnt{remark}{theorem}
\newtheorem{remark}[remark]{Remark}
\aliascntresetthe{remark}

\newaliascnt{definition}{theorem}
\newtheorem{definition}[definition]{Definition}
\aliascntresetthe{definition}

\DeclareMathOperator{\tr}{tr}
\DeclareMathOperator{\rank}{rank}

\newcommand{\R}{\mathbb{R}}
\newcommand{\1}{\mathbf{1}}
\newcommand{\inner}[2]{\langle #1,#2\rangle}
\newcommand{\froinner}[2]{\langle #1,#2\rangle_{\!F}}
\newcommand{\norm}[1]{\lVert #1\rVert}
\newcommand{\abs}[1]{\lvert #1\rvert}
\newcommand{\cP}{\mathcal{P}}

\title{Graph Eigenvalues and Projection Constants}

\author{Varun Sivashankar}
\address{Department of Mathematics, Princeton University, Princeton, New Jersey, USA}
\email{varunsiva@princeton.edu}

\author{Quanyu Tang}
\address{School of Mathematical Sciences, University of Science and Technology of China, Hefei 230026, P.\ R.\ China}
\email{tangquanyu827@gmail.com}

\author{Tanay Wakhare}
\address{Department of Electrical Engineering and Computer Science, Massachusetts Institute of Technology, Cambridge, Massachusetts, USA \newline \hspace*{1em} MathVision AI}
\email{tanay@mathvision.ai}
\date{}

\begin{document}
\begin{abstract}
For an integer $k\ge2$, let $\lambda_k(G)$ denote the $k$th largest adjacency eigenvalue of a graph $G$. For every graph $G$ on $n$ vertices and every $2 \leq k \leq n$, we prove
\[
\lambda_k(G)
\le \frac{(k-2)\sqrt{k+1}+2}{2k(k-1)}\,n-1.
\]
Our bound is tight for $k\in\{2,3,4,8,24\}$. We obtain it by reducing the graph-eigenvalue problem to an extremal problem for orthogonal projections and then applying the general upper bound on the absolute projection constant $\gamma(r)$ due to Der\k{e}gowska and Lewandowska. We also give an alternative proof of their bound by repairing the Gegenbauer-polynomial argument of K\"onig and Tomczak-Jaegermann. The resulting slack identity yields a strict improvement in every even dimension $r\ge4$ for which $r+2$ is not a perfect square.
\end{abstract}

\maketitle

\section{Introduction}

For a graph $G$ on $n$ vertices, let
\[
\lambda_1(G)\ge \lambda_2(G)\ge \cdots\ge \lambda_n(G)
\]
denote the eigenvalues of its adjacency matrix. More generally, if \(M\) is a real symmetric matrix of order \(m\), we write
\[
\lambda_1(M)\ge\lambda_2(M)\ge\cdots\ge\lambda_m(M)
\]
for its eigenvalues, counted with multiplicity. Following Nikiforov~\cite{Nikiforov15}, define
\[
c_k:=\sup\left\{\frac{\lambda_k(G)}{|V(G)|}: |V(G)|\ge k\right\}.
\]

The maximum-degree bound gives $\lambda_1(G)\leq n-1$, with equality for the
clique $K_n$. Hong~\cite{Hong88} proved that
$\lambda_k(G) \leq \frac{n-k}{2}$ for \(2\le k\le n\). In particular,
\(\lambda_2(G) \leq \frac{n}{2}-1\), and this is sharp when \(n\) is even,
as shown by \(K_{n/2}\sqcup K_{n/2}\). Hong then posed the problem of bounding
$\lambda_k(G)$ in terms of the order of the graph~\cite{Hong93}.

If $n$ is divisible by $k$ and $G = K_{n/k} \sqcup \cdots \sqcup K_{n/k}$ is the disjoint union of $k$ cliques each of size $\frac{n}{k}$, then $\lambda_k(G) = \frac{n}{k}-1$. This motivated a conjecture that $\lambda_k(G) \leq \lfloor\frac{n}{k}\rfloor$. Powers~\cite{Powers89} claimed that this upper bound holds, which would have essentially resolved the problem. Nikiforov~\cite{Nikiforov15} later found a gap in the proof and gave counterexamples for \(k\ge5\).

Nikiforov used the fact that for any odd prime power $q$, there exists a strongly regular graph on $q^3$ vertices~\cite{taylor1977regular}. The complements of these graphs are also strongly regular, and Nikiforov realized that whenever $k = q(q-1)+1$, the existence of this graph implies that $c_k \geq \frac{1}{2\sqrt{k-1}+1}$. Then using a result on the distribution of primes by Baker, Harman, and Pintz~\cite{baker2001difference}, Nikiforov proved that $c_k \geq \frac{1}{2\sqrt{k-1}+k^{1/3}}$ for all sufficiently large $k$. He also proved that $c_k \geq \frac{1}{4\sqrt{k-1}}$ for $k \geq 16$ and $c_k \geq \frac{1}{k-1/2}$ for $5 \leq k \leq 15$ using certain regular graphs. However, he asked whether it is still true that $c_3=1/3$ and $c_4=1/4$~\cite[Question~2.11]{Nikiforov15}. In 2023, Linz~\cite{Linz23} constructed examples showing that $c_4 \geq \frac{1+\sqrt{5}}{12} > \frac{1}{4}$ along with some improved constructions for $k \leq 24$. This left only the case $k=3$, and recent work by Leonida and Li~\cite{LeonidaLi25} and Li~\cite{Li25} provided strong evidence that the order bound $\frac{n}{3}$ might actually hold for the third eigenvalue problem.

Our main contribution is proving that $c_k$ can be upper bounded (up to scaling factors) by the absolute projection constant studied in Banach space theory. For $r\ge 1$, let
\[
\cP_r(N):=\{Q\in \R^{N\times N}: Q=Q^\top=Q^2,\ \rank(Q)=r\}.
\]
Let $q_{ij}$ denote the entries of $Q$. Define the entrywise \(\ell_1\)-norm of $Q$ by
\[\|Q\|_1 = \sum_{i=1}^N \sum_{j=1}^N |q_{ij}|.\]

For \(N\ge r\), define the finite-dimensional quantity
\[
\gamma(N,r):=\frac1N\max_{Q\in \cP_r(N)} \|Q\|_1, 
\]
and set
\[
\gamma(r):=\sup_{N\ge r}\gamma(N,r).
\]
We call \(\gamma(r)\) the absolute projection constant in dimension \(r\).
This orthogonal-projection formula agrees with the maximal absolute projection
constant of Banach space theory. See \cite{Basso19,DeregowskaLewandowska23}.

Our main result is the following.

\subsection*{\texorpdfstring{\Cref{thm:main-explicit}}{The main explicit bound}}
Let $2 \leq k \leq n$.
For every graph $G$ on $n$ vertices,
\[
\lambda_k(G)\le \frac{\gamma(k-1)}{2(k-1)}\,n-1
\le \frac{(k-2)\sqrt{k+1}+2}{2k(k-1)}\, n-1.
\]
In particular, $c_k\le \frac{(k-2)\sqrt{k+1}+2}{2k(k-1)}$.
We write
\[
\alpha_k:=\frac{(k-2)\sqrt{k+1}+2}{2k(k-1)}
\]
for this coefficient.

For $k=2$, this is the classical sharp bound $\lambda_2(G)\le n/2-1$~\cite{Hong88,Nikiforov15}. For $k=3$, it yields the sharp bound $\lambda_3(G)\le n/3-1$. For $k\in\{4,8,24\}$, the coefficient $\alpha_k$ matches Linz's lower bounds~\cite{Linz23}, so the theorem is tight for $k\in\{2,3,4,8,24\}$. This tightness comes from equiangular lines. See \Cref{sec:weighted-equiangular}.

The first inequality is due to our reduction from the graph eigenvalue problem to the absolute projection constant $\gamma(k-1)$ as described in \Cref{sec:reduction}. The second inequality follows from the absolute projection constant bound of Der\k{e}gowska and Lewandowska~\cite{DeregowskaLewandowska23}.

\subsection*{\texorpdfstring{\Cref{thm:projection-constant}~\cite{DeregowskaLewandowska23}}{The absolute projection constant bound}}
Let \(Q\in \R^{n\times n}\) be a rank-\(r\) orthogonal projection, where \(r\ge 1\). 

\[
\|Q\|_1=\sum_{i,j=1}^n \abs{q_{ij}}
\le \frac{r+\sqrt{r+2}}{1+\sqrt{r+2}}\, n.
\]
So in particular, 
\[\gamma(r) \leq \frac{r+\sqrt{r+2}}{1+\sqrt{r+2}}.\]

K\"onig and Tomczak-Jaegermann~\cite{KonigTomczak94} claimed a proof of this estimate, together with sharpness and uniqueness statements in the equiangular cases.  Their later paper~\cite{KonigTomczak03} revised part of the equality-case picture, but the argument for the general estimate is incorrect. Kobos~\cite{Kobos25} gives a careful account of this history. The case \(r=2\), namely \(\gamma(2)\le4/3\), is the classical Gr\"unbaum conjecture~\cite{Grunbaum60}, first proved by Chalmers and Lewicki~\cite{ChalmersLewicki10} and reproved by Basso~\cite{Basso19}. Low-dimensional cases beyond $r=2$ were first approached by more elaborate methods, for example in Chalmers and Lewicki~\cite{ChalmersLewicki09} and computationally in \cite{Basso19} and \cite{FoucartSkrzypek17}. Finally, Der\k{e}gowska and Lewandowska~\cite{DeregowskaLewandowska23}, inspired by work of Bukh and Cox~\cite{BukhCox20}, resolved the problem and gave a surprisingly short and elementary linear-algebraic proof of the general upper bound in \Cref{thm:projection-constant}.

In \Cref{sec:projection-constants}, we provide an alternative proof of this result using Gegenbauer polynomials by repairing the argument in \cite{KonigTomczak94}. While the key ideas of our proof are similar to \cite{DeregowskaLewandowska23}, we believe it is of independent interest. In particular, the slack terms in our argument are more transparent. This allows us to obtain a strict improvement on $\gamma(r)$ whenever $r$ is even and $r+2$ is not a perfect square. Appendix~\ref{app:improved-proj-bound} proves the following result.

\subsection*{\texorpdfstring{\Cref{thm:improved-proj-bound}}{The improved absolute projection constant bound}}
Let \(r\ge4\) be even, and suppose that \(r+2\) is not a perfect square. Then
\[
\gamma(r)\le
\frac{r+\sqrt{r+2}}{1+\sqrt{r+2}}
-
\frac{r}{2(2r)^{4r+8}}.
\]

\section{From projections to graph eigenvalues}\label{sec:reduction}
In this section, we prove our main theorem:
\begin{theorem}\label{thm:main-explicit}
Let $2 \leq k \leq n$.
For every graph $G$ on $n$ vertices,
\[
\lambda_k(G)\le \frac{\gamma(k-1)}{2(k-1)}\,n-1
\le \frac{(k-2)\sqrt{k+1}+2}{2k(k-1)}\, n-1.
\]
In particular, $c_k\le \frac{(k-2)\sqrt{k+1}+2}{2k(k-1)}$.
\end{theorem}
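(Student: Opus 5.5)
The plan is to pass from the adjacency matrix to a $\pm1$ matrix by a rank-one perturbation, and then bound a sum of its top eigenvalues by a trace against a rank-$(k-1)$ orthogonal projection. The second inequality is then \Cref{thm:projection-constant} plus algebra.

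\emph{Step 1 (shift to a $\pm1$ matrix).} Let $A$ be the adjacency matrix of $G$ and $J$ the all-ones matrix. Put $B=A+I$, so that $\lambda_k(B)=\lambda_k(G)+1$. Write
\[
B=\bigl(B-\tfrac12 J\bigr)+\tfrac12 J .
\]
Since $\tfrac12 J$ is positive semidefinite of rank one, Weyl's interlacing inequality for rank-one perturbations gives $\lambda_k(B)\le \lambda_{k-1}\bigl(B-\tfrac12J\bigr)$. Set $M:=2B-J$. Every entry of $M$ is $\pm1$: the diagonal entries and the entries at adjacent pairs equal $1$, and all other entries equal $-1$. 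Hence
\[
2\bigl(\lambda_k(G)+1\bigr)\le \lambda_{k-1}(M).
\]

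\emph{Step 2 (projection).} Let $Q\in\cP_{k-1}(n)$ be the orthogonal projection onto the span of orthonormal eigenvectors of $M$ for $\lambda_1(M),\dots,\lambda_{k-1}(M)$. Then
\[
(k-1)\lambda_{k-1}(M)\le \sum_{i=1}^{k-1}\lambda_i(M)=\tr(QM)=\sum_{i,j}q_{ij}m_{ij}\le \sum_{i,j}\abs{q_{ij}}=\norm{Q}_1,
\]
where the last step uses $\abs{m_{ij}}=1$. Since $n\ge k>k-1$, the definition of $\gamma$ gives $\norm{Q}_1\le n\,\gamma(n,k-1)\le \gamma(k-1)\,n$. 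Combining with Step 1 yields $2(k-1)(\lambda_k(G)+1)\le \gamma(k-1)\,n$, which is the first inequality.

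\emph{Step 3 (explicit constant).} Apply \Cref{thm:projection-constant} with $r=k-1\ge1$ and write $s=\sqrt{k+1}=\sqrt{r+2}$. Multiplying numerator and denominator by $s-1$ and using $s^2-1=k$ gives
\[
\frac{k-1+s}{1+s}=\frac{(k-1+s)(s-1)}{k}=\frac{(k-2)s+2}{k}.
\]
Dividing by $2(k-1)$ gives $\alpha_k$, and taking the supremum over $G$ bounds $c_k$.

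The only step that needs care is the interlacing in Step 1: it costs exactly one index, which is why the projection has rank $k-1$ rather than $k$. Everything else is routine.
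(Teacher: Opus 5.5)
Your proof is correct and is essentially the paper's argument with different bookkeeping. The paper loses the index through Weyl's inequality for $A(G)+A(\overline G)=J-I$, then applies Ky Fan's minimum principle to $A(\overline G)$ in \Cref{thm:matrix-master} and finally discards $\1^\top Q\1\ge0$; you instead absorb $\tfrac12 J$ by rank-one interlacing and apply Ky Fan's maximum principle to $M=2(A+I)-J=J-2A(\overline G)$, so your estimate $\tr(MQ)\le\norm{Q}_1$ is exactly the paper's $\tr(A(\overline G)Q)\ge\tfrac12(\1^\top Q\1-\norm{Q}_1)$ rewritten.
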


\begin{proof}
By the absolute projection constant bound in \Cref{thm:projection-constant},
$\gamma(r) \leq \frac{r+\sqrt{r+2}}{1+\sqrt{r+2}}$. Substituting $r=k-1$
gives $\frac{\gamma(k-1)}{2(k-1)} \leq
\frac{k-1 + \sqrt{k+1}}{2(k-1)(1+\sqrt{k+1})}$. Therefore, it suffices to
prove \Cref{thm:main-projection} below.
\end{proof}

\begin{theorem}\label{thm:main-projection}
Let \(2\le k\le n\), and let \(G\) be a graph on \(n\) vertices. Then
\[
\lambda_k(G)\le \frac{\gamma(k-1)}{2(k-1)}\,n-1.
\]
\end{theorem}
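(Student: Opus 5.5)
The plan is to turn the $k$th eigenvalue of $G$ into a trace against a rank-$(k-1)$ orthogonal projection. Let $A$ be the adjacency matrix of $G$, let $J$ be the $n\times n$ all-ones matrix, and set $B:=A+I$. Then $B$ is a symmetric $0/1$ matrix with ones on the diagonal, and $\lambda_i(B)=\lambda_i(G)+1$. The key auxiliary matrix is
\[
M:=2B-J,
\]
a symmetric matrix all of whose entries lie in $\{-1,+1\}$. Subtracting $J$ is what turns the $0/1$ pattern into a $\pm1$ pattern. Since $J$ has rank one, this costs exactly one eigenvalue, which is why the relevant projection has rank $k-1$ rather than $k$.

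\textbf{Step 1 (interlacing).} Apply Weyl's inequality $\lambda_{i+j-1}(X+Y)\le \lambda_i(X)+\lambda_j(Y)$ with $X=M$, $Y=J$ and $j=2$. Since $\lambda_2(J)=0$, this gives
\[
\lambda_{i+1}(2B)\le \lambda_i(M)\qquad (1\le i\le n-1).
\]
Hence, for $1\le i\le k-1$,
\[
\lambda_i(M)\ge \lambda_{i+1}(2B)\ge \lambda_k(2B)=2(\lambda_k(G)+1).
\]

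\textbf{Step 2 (projection).} Let $Q$ be the orthogonal projection onto the span of orthonormal eigenvectors of $M$ for $\lambda_1(M),\dots,\lambda_{k-1}(M)$. Then $Q\in\cP_{k-1}(n)$, which makes sense because $n\ge k>k-1$. We have
\[
\tr(QM)=\sum_{i=1}^{k-1}\lambda_i(M)\ge 2(k-1)(\lambda_k(G)+1).
\]

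\textbf{Step 3 (entrywise bound).} Because $M$ and $Q$ are symmetric and $|m_{ij}|=1$ for all $i,j$,
\[
\tr(QM)=\sum_{i,j}q_{ij}m_{ij}\le \sum_{i,j}|q_{ij}|=\|Q\|_1\le n\,\gamma(n,k-1)\le n\,\gamma(k-1).
\]
Combining this with Step 2 gives $2(k-1)(\lambda_k(G)+1)\le \gamma(k-1)\,n$, which is the claimed inequality.

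The only step needing care is Step 1: the interlacing must lose exactly one index, so that the projection has rank $k-1$ and not $k$. Weyl's inequality with the rank-one positive semidefinite matrix $J$ does this. Everything else is a direct computation.
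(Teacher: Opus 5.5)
Your proof is correct and is essentially the paper's argument. Both use a Weyl inequality against the rank-one all-ones matrix $J$ to lose exactly one index, then Ky Fan with a rank-$(k-1)$ projection, then the entrywise bound by $\|Q\|_1$. The only difference is packaging: the paper passes to the complement via $A(G)+A(\overline G)=J-I$ and bounds the $k-1$ smallest eigenvalues of $A(\overline G)$ using $\tr(A(\overline G)Q)\ge\frac12(\1^\top Q\1-\|Q\|_1)$, whereas your $\pm1$ matrix $2(A+I)-J$ makes the final entrywise step immediate.
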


The key step in the proof of \Cref{thm:main-projection} is the following matrix
estimate for the sum of the \(r\) smallest eigenvalues.  Although we
will apply it to adjacency matrices, it holds for every symmetric matrix with
nonnegative diagonal entries and off-diagonal entries in \([0,1]\).

\begin{theorem}\label{thm:matrix-master}
Let $A=(a_{ij})\in \R^{n\times n}$ be symmetric, with eigenvalues
\[
\mu_1\ge \mu_2\ge \cdots\ge \mu_n.
\]
Assume that
\[
0\le a_{ij}\le 1 \quad (i\ne j),
\qquad
a_{ii}\ge 0 \quad (1\le i\le n).
\]
Let \(1\le r\le n\) be an integer. Then
\[
\mu_{n-r+1}+\cdots+\mu_n\ge -\frac{\gamma(r)}{2}\,n.
\]
In particular, $\mu_{n-r+1}\ge -\frac{\gamma(r)}{2r}\,n$.
\end{theorem}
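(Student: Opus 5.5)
By Ky Fan's variational principle, the sum of the $r$ smallest eigenvalues of $A$ is a minimum over rank-$r$ orthogonal projections:
\[
\mu_{n-r+1}+\cdots+\mu_n=\min_{Q\in\cP_r(n)} \tr(AQ)=\min_{Q\in\cP_r(n)}\sum_{i,j} a_{ij}q_{ij}.
\]
I will take $Q$ to be the projection onto the span of the bottom $r$ eigenvectors, which attains the minimum, and bound $\tr(AQ)$ from below using only the entrywise hypotheses on $A$ and the structure of $Q$. Afterwards I relate the result to $\|Q\|_1\le \gamma(r)\,n$, which holds by the definition of $\gamma(r)$ since $Q\in\cP_r(n)$ with $n\ge r$.

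I expand the trace as
\[
\tr(AQ)=\sum_i a_{ii}q_{ii}+\sum_{i\ne j}a_{ij}q_{ij}.
\]
For the diagonal part, $q_{ii}\ge 0$ for any projection, so $a_{ii}\ge0$ gives $\sum_i a_{ii}q_{ii}\ge 0$. For the off-diagonal part, $0\le a_{ij}\le1$ gives $a_{ij}q_{ij}\ge -q_{ij}^-$, where $x^-=\max(-x,0)$. Hence
\[
\tr(AQ)\ge -\sum_{i\ne j} q_{ij}^-=-N_-,
\]
where $N_-$ is the total of the negative off-diagonal entries in absolute value.

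Next I need $N_-\le \tfrac12\|Q\|_1$. Write $P$ for the sum of all positive entries of $Q$, diagonal included, so that $\|Q\|_1=P+N_-$, since the diagonal entries are nonnegative. The key observation is $\sum_{i,j}q_{ij}=\1^\top Q\1=\norm{Q\1}^2\ge 0$, so $P-N_-\ge 0$. Then $2N_-\le P+N_-=\|Q\|_1\le\gamma(r)\,n$, and therefore $\tr(AQ)\ge -\gamma(r)n/2$. The "in particular" statement follows because $\mu_{n-r+1}$ is the largest of the $r$ smallest eigenvalues, so $r\mu_{n-r+1}\ge\mu_{n-r+1}+\cdots+\mu_n$.

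The main obstacle is obtaining the factor $\tfrac12$, which is precisely the step $\1^\top Q\1\ge0$ splitting $\|Q\|_1$ between positive and negative mass. I should also check that ignoring the positive off-diagonal terms and the diagonal loses nothing needed. It does not, since we only want a lower bound. The rest is routine.
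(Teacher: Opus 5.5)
Your proposal is correct and is essentially the paper's own proof. Both use Ky Fan's minimum principle, the entrywise bound $a_{ij}q_{ij}\ge\min(0,q_{ij})$, and $\1^\top Q\1=\norm{Q\1}^2\ge0$ to obtain the factor $\tfrac12$; your split $\|Q\|_1=P+N_-$ with $P-N_-\ge0$ is the paper's identity $\sum_{i,j}\min(0,q_{ij})=\tfrac12(\1^\top Q\1-\|Q\|_1)$ written out term by term.
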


\begin{proof}
By Ky Fan's minimum principle~\cite[Corollary~4.3.39]{HornJohnson12},
\[
\mu_{n-r+1}+\cdots+\mu_n=\min_{Q\in \cP_r(n)}\tr(AQ).
\]
Fix $Q=(q_{ij})\in \cP_r(n)$. Since $Q$ is positive semidefinite and $q_{ii} \geq 0$ for all $i \in \{1,\ldots,n\}$,
\begin{align*}
\tr(AQ)
&= \sum_{i,j} a_{ij} q_{ij}\\ 
&\geq \sum_{i,j} \min(0,q_{ij})\\ 
&= \frac{1}{2} \sum_{i,j} (q_{ij} - |q_{ij}|)\\ 
&= \frac{1}{2} \left( \1^\top Q \1 - \|Q\|_1\right)\\
&\geq -\frac{\|Q\|_1}{2}\\ 
&\geq -\frac{\gamma(r)}{2}\, n
\end{align*}
Taking the minimum over $Q\in \cP_r(n)$ gives the first claim. The second follows from
\[
r\,\mu_{n-r+1}\ge \mu_{n-r+1}+\cdots+\mu_n.
\qedhere\]
\end{proof}

\begin{proof}[Proof of \Cref{thm:main-projection}]
Let $r:=k-1$ and let $\overline G$ be the complement of $G$. Since
\[
A(G)+A(\overline G)=J-I,
\]
the eigenvalues of $J-I$ are $n-1,-1,\ldots,-1$, so in particular
\[
\lambda_2(J-I)=-1.
\]
Apply Weyl's inequality in the form
\[
\lambda_i(X)+\lambda_j(Y)\le \lambda_{i+j-n}(X+Y)
\qquad (i+j\ge n+1)
\]
with
\[
X=A(G),\qquad Y=A(\overline G),\qquad i=k,\qquad j=n-k+2.
\]
Since $i+j=n+2$, we obtain
\[
\lambda_k(G)+\lambda_{n-k+2}(\overline G)\le -1.
\]
Now apply \Cref{thm:matrix-master} with $r=k-1$ to $A(\overline G)$:
\[
\lambda_{n-k+2}(\overline G)\ge -\frac{\gamma(k-1)}{2(k-1)}\,n.
\]
Combining the two inequalities gives
\[
\lambda_k(G)\le \frac{\gamma(k-1)}{2(k-1)}\,n-1.
\qedhere\]
\end{proof}

\section{Positive kernels and Gegenbauer polynomials}

We need two positive-semidefinite kernels for the proof of
\Cref{thm:projection-constant}.

Fix an integer $r\ge 1$. Let $S^{r-1} := \{v \in \R^r: \|v\|_2 = 1\}$ be the unit sphere in $\R^r$, where $\|\cdot\|_2$ denotes the Euclidean norm. For real vectors $u,v$, let $\inner{u}{v}$ denote the standard inner product. For real-valued square matrices $A,B \in \R^{m\times m}$, write $\inner{A}{B}_F = \tr(A^{\top} B)$ for the Frobenius inner product. This is equivalent to taking the inner product after viewing $A$ and $B$ as $m^2$-dimensional vectors. For a matrix, $\|A\|_F = \sqrt{\froinner{A}{A}}$ and $\|A\|_1 = \sum_{i,j} |A_{ij}|$. Let $I_r \in \R^{r\times r}$ denote the identity matrix. A matrix $M \in \R^{m\times m}$ is positive semidefinite if $x^\top M x \geq 0$ for all $x \in \R^m$.

\begin{definition}
A function \(f:\R\to\R\) is positive semidefinite on \(S^{r-1}\) if, for every \(m\ge1\) and every \(u_1,\ldots,u_m\in S^{r-1}\), the matrix $\left(f(\inner{u_i}{u_j})\right)_{i,j=1}^m$ is positive semidefinite.
\end{definition}

Define 
\[
f^r_2(t) := t^2 - \frac{1}{r},
\qquad
f^r_4(t) := t^4 - \frac{3}{r(r+2)}.
\]
We prove that \(f_2^r\) and \(f_4^r\) are positive semidefinite on \(S^{r-1}\).
In \Cref{sec:projection-constants}, these kernels provide a scalar majorant for
\(\abs{\inner{u_i}{u_j}}\) in the estimate of \(\|Q\|_1\).

\begin{lemma}\label{lem:psd2}
For $r \geq 1$, define
\[
\phi(u):=uu^\top-\frac1r I_r,
\qquad u\in S^{r-1}.
\]
Then for all $u,v\in S^{r-1}$,
\[
\froinner{\phi(u)}{\phi(v)}=f_2^r(\inner{u}{v}).
\]
Further, $f_2^r$ is positive semidefinite on $S^{r-1}$.
\end{lemma}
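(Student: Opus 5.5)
The plan is to compute the Frobenius inner product directly and then read off positive semidefiniteness from the resulting Gram structure. First I expand
\[
\froinner{\phi(u)}{\phi(v)}=\froinner{uu^\top}{vv^\top}-\frac1r\froinner{uu^\top}{I_r}-\frac1r\froinner{I_r}{vv^\top}+\frac1{r^2}\froinner{I_r}{I_r}.
\]
Each term is then evaluated separately, using $\tr(A^\top B)$ and $\norm{u}_2=\norm{v}_2=1$:
\begin{itemize}
\item $\froinner{uu^\top}{vv^\top}=\tr(uu^\top vv^\top)=(u^\top v)^2=\inner{u}{v}^2$;
\item $\froinner{uu^\top}{I_r}=\tr(uu^\top)=\norm{u}_2^2=1$, and likewise $\froinner{I_r}{vv^\top}=1$;
\item $\froinner{I_r}{I_r}=r$.
\end{itemize}
Summing gives $\inner{u}{v}^2-\frac2r+\frac1r=\inner{u}{v}^2-\frac1r=f_2^r(\inner{u}{v})$, which is the claimed identity.

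For positive semidefiniteness, fix $u_1,\ldots,u_m\in S^{r-1}$. By the identity, the matrix $\bigl(f_2^r(\inner{u_i}{u_j})\bigr)_{i,j}$ equals the Gram matrix $\bigl(\froinner{\phi(u_i)}{\phi(u_j)}\bigr)_{i,j}$ of the vectors $\phi(u_i)$ in the inner product space $(\R^{r\times r},\froinner{\cdot}{\cdot})$. For any $x\in\R^m$ we then have
\[
x^\top\bigl(f_2^r(\inner{u_i}{u_j})\bigr)x=\Bigl\|\sum_i x_i\phi(u_i)\Bigr\|_F^2\ge0.
\]
Hence the matrix is positive semidefinite for every $m$ and every choice of points, which is exactly the definition of $f_2^r$ being positive semidefinite on $S^{r-1}$.

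There is no real obstacle here. The only step needing care is the cross term: the $-\frac1r I_r$ shift must be paired with the trace normalization $\tr(uu^\top)=1$ so that the constant comes out to exactly $-\frac1r$.
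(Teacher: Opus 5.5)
Your proof is correct and follows the same route as the paper: a direct trace expansion of $\froinner{\phi(u)}{\phi(v)}$ giving $\inner{u}{v}^2-\frac1r$, then recognizing the kernel matrix as the Gram matrix of the $\phi(u_i)$ in the Frobenius inner product. You also write out the term-by-term expansion and the quadratic-form identity $x^\top M x=\bigl\|\sum_i x_i\phi(u_i)\bigr\|_F^2$, which the paper leaves implicit.
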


\begin{proof}
A direct computation gives
\begin{align*}
\froinner{\phi(u)}{\phi(v)}
&=\tr\!\left(\left(uu^\top-\frac1r I_r\right)\left(vv^\top-\frac1r I_r\right)\right) \\
&=(u^\top v)^2-\frac1r
 =\inner{u}{v}^2-\frac1r
 =f_2^r(\inner{u}{v}).
\end{align*}
For any $u_1,\ldots,u_n \in S^{r-1}$, the matrix
\[
\bigl(f_2^r(\inner{u_i}{u_j})\bigr)_{i,j=1}^n
\]
is the Gram matrix of $\phi(u_1),\ldots,\phi(u_n)$ with respect to the
Frobenius inner product, and is therefore positive semidefinite.
\end{proof}

Appendix~\ref{app:psd4} gives the analogous Gram representation for \(f_4^r\)
using fourth-order tensors.

\begin{lemma}\label{lem:psd4}
For $r \geq 1$, \(f_4^r(t)\) is positive semidefinite on \(S^{r-1}\).
\end{lemma}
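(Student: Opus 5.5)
Following the pattern of \Cref{lem:psd2}, I will exhibit $f_4^r(\inner{u}{v})$ as a Frobenius-type inner product of feature vectors depending only on $u$ and $v$. Consider the symmetric fourth-order tensor $u^{\otimes 4}\in(\R^r)^{\otimes 4}$ with the standard inner product. Then $\inner{u^{\otimes4}}{v^{\otimes4}}=\inner{u}{v}^4$. I want to subtract a fixed tensor $T$ so that
\[
\psi(u):=u^{\otimes4}-T
\]
satisfies $\inner{\psi(u)}{\psi(v)}=\inner{u}{v}^4-\frac{3}{r(r+2)}$. Expanding gives
\[
\inner{\psi(u)}{\psi(v)}=\inner{u}{v}^4-\inner{u^{\otimes4}}{T}-\inner{T}{v^{\otimes4}}+\norm{T}^2 .
\]
So it suffices to find $T$ with $\inner{u^{\otimes4}}{T}=c$ constant for all unit $u$ and $\norm{T}^2-2c=-\frac{3}{r(r+2)}$.

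The natural choice is the rotation-invariant tensor $T=\beta\,S$, where
\[
S_{ijkl}=\delta_{ij}\delta_{kl}+\delta_{ik}\delta_{jl}+\delta_{il}\delta_{jk}.
\]
Then $\inner{u^{\otimes4}}{S}=3\norm{u}^4=3$, so $c=3\beta$. For the norm, $\norm{S}^2=\sum(\delta\delta+\delta\delta+\delta\delta)^2$. Each of the three squared terms contributes $r^2$ and each of the six cross terms contributes $r$, so $\norm{S}^2=3r^2+6r=3r(r+2)$. Hence
\[
\norm{T}^2-2c=3r(r+2)\beta^2-6\beta .
\]
This expression is minimized at $\beta=\frac{1}{r(r+2)}$, where it equals $\frac{3}{r(r+2)}-\frac{6}{r(r+2)}=-\frac{3}{r(r+2)}$, exactly as required. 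Equivalently, $T=\int_{S^{r-1}}w^{\otimes4}\,d\sigma(w)$ is the spherical average, using the moment $\int w_i^4=\frac{3}{r(r+2)}$. In this form the identity is simply that $\psi$ is the centered feature map.

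With this identity in hand, the matrix $\bigl(f_4^r(\inner{u_i}{u_j})\bigr)$ is the Gram matrix of $\psi(u_1),\dots,\psi(u_m)$, and is therefore positive semidefinite. The only step needing care is the count for $\norm{S}^2$, specifically the cross terms such as $\sum\delta_{ij}\delta_{kl}\delta_{ik}\delta_{jl}=r$. This is routine index bookkeeping, so I expect no real obstacle.
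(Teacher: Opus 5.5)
Your proposal is correct and is essentially the paper's proof. The paper uses the same centered feature map $\psi(u)=u^{\otimes 4}-\Omega$ with $\Omega=\frac{1}{r(r+2)}C$, where $C_{ijkl}=\delta_{ij}\delta_{kl}+\delta_{ik}\delta_{jl}+\delta_{il}\delta_{jk}$, and it obtains $\langle u^{\otimes4},\Omega\rangle=\frac{3}{r(r+2)}$ and $\|C\|^2=3r(r+2)$ by the same index count. Your spherical-average interpretation of $\Omega$ is a correct extra remark that the paper does not need.
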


\begin{remark}
\Cref{lem:psd2} and \Cref{lem:psd4} are just special cases of a theorem by Schoenberg~\cite{Schoenberg42}. Let $G_\ell^{\lambda}$ denote the Gegenbauer polynomial of degree $\ell$. These polynomials have generating function~\cite{stein1971introduction}: 
\[
\frac{1}{(1 - 2tx + x^2)^\lambda} = \sum_{\ell=0}^\infty G_{\ell}^\lambda (t) x^{\ell}.
\]
For \(r\ge3\), Schoenberg proved the following strong characterization: $f(t)$ is a real continuous function such that the matrix $f(\inner{u_i}{u_j})_{i,j=1}^n$ is positive semidefinite for all subsets $\{u_1,\ldots,u_n\}\subseteq S^{r-1}$ if and only if $f$ is of the form $\sum_{\ell=0}^\infty a_{\ell} G_{\ell}^{r/2-1}$ with $a_{\ell} \geq 0$.

In particular, when \(r\ge3\), the normalized Gegenbauer kernels
\[
\widetilde G_\ell^{r/2-1}(t)
:=
\frac{G_\ell^{r/2-1}(t)}
     {G_\ell^{r/2-1}(1)}
\]
are positive semidefinite on \(S^{r-1}\). We have 
\[
f_2^r(t)=\frac{r-1}{r}\widetilde G_2^{\,r/2-1}(t) \quad \text{and} \quad f_4^r(t)=\frac{(r-1)(r+1)}{(r+2)(r+4)}\widetilde G_4^{\,r/2-1}(t)+\frac{6}{r+4}f_2^r(t).
\]
Thus \(f_2^r\) and \(f_4^r\) are positive linear combinations of normalized Gegenbauer kernels, and hence are positive semidefinite on \(S^{r-1}\) by \cite{Schoenberg42}. For \(r=2\), the same statement is the usual limiting Chebyshev, or Fourier-cosine, case for the circle. This case is also covered by Schoenberg's theorem, but not by the displayed generating function if one substitutes \(\lambda=0\) literally. In any case, the direct Gram proofs above cover all \(r\ge1\), so we do not require this strong characterization.  
\end{remark}

\section{A Gegenbauer-polynomial proof of the absolute projection constant bound}\label{sec:projection-constants}

In this section, we give an alternative proof of the upper bound on the absolute projection constant $\gamma(r)$ due to Der\k{e}gowska and Lewandowska~\cite{DeregowskaLewandowska23} by repairing an incorrect argument of \cite{KonigTomczak94} using Gegenbauer polynomials.

\begin{theorem}[\cite{DeregowskaLewandowska23}]\label{thm:projection-constant}
Let \(Q\in \R^{n\times n}\) be a rank-\(r\) orthogonal projection, where \(r\ge 1\). Then
\[
\|Q\|_1=\sum_{i,j=1}^n \abs{q_{ij}}
\le \frac{r+\sqrt{r+2}}{1+\sqrt{r+2}} n.
\]
In particular,
\[\gamma(r) \leq \frac{r+\sqrt{r+2}}{1+\sqrt{r+2}}.\]
\end{theorem}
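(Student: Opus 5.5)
The plan is to represent $Q$ as a Gram matrix and then bound $\abs{t}$ by a polynomial built from the positive semidefinite kernels $f_2^r$ and $f_4^r$ of \Cref{lem:psd2} and \Cref{lem:psd4}. Since $Q$ is a rank-$r$ orthogonal projection, we can write $q_{ij}=\inner{x_i}{x_j}$ with $x_1,\dots,x_n\in\R^r$. These vectors satisfy $\sum_i x_ix_i^\top=I_r$, $\sum_i\norm{x_i}_2^2=r$ and $\norm{x_i}_2^2=q_{ii}\le 1$. Put $s_i:=\norm{x_i}_2$ and $x_i=s_iu_i$ with $u_i\in S^{r-1}$ (the indices with $s_i=0$ contribute nothing). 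Writing $t_{ij}:=\inner{u_i}{u_j}$, we get
\[
\|Q\|_1=\sum_{i,j}s_is_j\abs{t_{ij}}.
\]
The weights $s_is_j$ form a rank-one positive semidefinite matrix. Hence, for any kernel $f$ that is positive semidefinite on $S^{r-1}$, we have $\sum_{i,j}s_is_jf(t_{ij})\ge0$, since this is the quadratic form of $(f(t_{ij}))$ evaluated at $s$. This is exactly the mechanism that lets us discard kernel terms carrying a negative coefficient.

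\textbf{Majorant.} The off-diagonal $t_{ij}$ will be handled by a quartic majorant
\[
\abs{t}\le p(t):=a-b\,f_2^r(t)-c\,f_4^r(t),\qquad t\in[-1,1],\quad b,c\ge0 .
\]
After expanding, $p(t)=a'+b\,(\ldots)t^2-c\,t^4$. We choose $p$ to touch $\abs{t}$ tangentially at $t=\pm1/\sqrt{r+2}$, which is the equiangular angle responsible for the equality cases. We do not insist on equality at $t=\pm1$. The diagonal terms $i=j$, where $t_{ii}=1$, are handled separately by the identity
\[
\|Q\|_1=\sum_{i,j}s_is_jp(t_{ij})+\bigl(1-p(1)\bigr)\sum_i s_i^2 .
\]
The first sum is at most $a\bigl(\sum_i s_i\bigr)^2\le a\,n\sum_i s_i^2=a\,nr$ by positive semidefiniteness and Cauchy--Schwarz. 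The second term equals $(1-p(1))r$. If $p(1)\ge1$ it can simply be dropped. Otherwise we bound it using $r\le n$, or more carefully using $s_i\le1$, so that it is absorbed into a linear term in $n$. The result is an estimate $\|Q\|_1\le C(a,b,c,r)\,n$.

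\textbf{Optimization.} We then optimize over the admissible pairs $(b,c)$, with $a$ forced by the tangency condition, and check that the optimum equals $\frac{r+\sqrt{r+2}}{1+\sqrt{r+2}}$. If the crude Cauchy--Schwarz step loses too much, the refinement will be to use the constraint $\sum_i x_ix_i^\top=I_r$ on the $f_2$ part. That constraint gives the exact value
\[
\sum_{i,j}s_i^2s_j^2\,t_{ij}^2=\sum_{i,j}q_{ij}^2=r,
\]
and the idea would be to pass between the weights $s_is_j$ and $s_i^2s_j^2$ by the AM--GM inequality.

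\textbf{Main obstacle.} I expect the difficulty to lie in choosing the coefficients so that three things hold at once: $p$ majorizes $\abs{t}$ on all of $[-1,1]$, which requires a check that the downward quartic stays above $\abs{t}$ near $0$ and near $\pm1$; the constant term and the diagonal correction combine to exactly the stated constant rather than something slightly larger; and $b,c\ge0$. This is presumably where the original argument of \cite{KonigTomczak94} failed. I would first try the two-point tangency ansatz at $\pm1/\sqrt{r+2}$ and verify $p(t)-\abs{t}\ge0$ by factoring it as a square times a nonnegative quadratic in $\abs{t}$.
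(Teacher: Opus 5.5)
There is a genuine gap: the sign constraint $b,c\ge0$ in your majorant $p=a-b\,f_2^r-c\,f_4^r$ makes the argument collapse. Both $f_2^r$ and $f_4^r$ are increasing in $\abs{t}$. So $p(t)=a+\frac{b}{r}+\frac{3c}{r(r+2)}-bt^2-ct^4$ is nonincreasing in $\abs{t}$ on $[-1,1]$, and $p'(t)\le0$ on $(0,1]$. Hence $p$ can never be tangent to $\abs{t}$ at $t=1/\sqrt{r+2}$, where $\abs{t}$ has slope $1$. Moreover, $p\ge\abs{t}$ must hold up to $\abs{t}=1$: off-diagonal pairs have $\abs{t_{ij}}=1$ whenever two rows of $V$ are parallel, and in any case by continuity. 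This forces $p(1)\ge1$, that is, $a\ge 1+b(1-\frac{1}{r})+c(1-\frac{3}{r(r+2)})\ge1$. Your diagonal correction $(1-p(1))\sum_i s_i^2$ is then nonpositive, and your displayed ``identity'' is really only an inequality. So the best the scheme can deliver is $\|Q\|_1\le a(\sum_i s_i)^2\le rn$, attained at $b=c=0$. That is far weaker than $\frac{r+\sqrt{r+2}}{1+\sqrt{r+2}}\,n$, which is of order $\sqrt{r}\,n$, so the optimization step cannot return the stated constant.

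What is missing is that the majorant needs a \emph{positive} coefficient on $f_2^r$. The paper uses $\abs{t}\le a_r+b_rf_2^r(t)-\rho_rf_4^r(t)$ with $b_r,\rho_r>0$, equality at $t=1$, and a double contact at $t=1/\sqrt{r+2}$; the slack factors as $(1-t)(\sqrt{r+2}\,t-1)^2$ times a positive factor. Only the $f_4^r$-sum is discarded by positive semidefiniteness. The $f_2^r$-sum equals $X^2=\norm{\sum_i s_i(u_iu_i^\top-\frac1rI_r)}_F^2\ge0$, and it must be bounded from above \emph{jointly} with $C^2=(\sum_i s_i)^2$, via
\[
C^2+rX^2=r\sum_{i,j}s_is_jt_{ij}^2\le r\sum_{i,j}s_j^2t_{ij}^2=r\sum_i u_i^\top\Bigl(\sum_j s_j^2u_ju_j^\top\Bigr)u_i=rn .
\]
This uses $2s_is_j\le s_i^2+s_j^2$ and the frame identity $\sum_j s_j^2u_ju_j^\top=I_r$. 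Together with $b_r\le ra_r$ it gives $\|Q\|_1\le a_rC^2+b_rX^2\le a_r(C^2+rX^2)\le ra_r\,n$, which is the stated bound.

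Your fallback does not supply this. The identity $\sum_{i,j}s_i^2s_j^2t_{ij}^2=r$ is the wrong normalization. Since $s_i\le1$, one has $s_is_j\ge s_i^2s_j^2$, so that comparison bounds $\sum_{i,j}s_is_jt_{ij}^2$ from below, not above. The useful weights are the one-sided ones $s_j^2$, for which $\sum_j s_j^2t_{ij}^2=1$ for each $i$. Finally, bounding $C^2\le rn$ and $X^2\le n$ separately would still lose the constant, because the budget $rn$ has to be shared between $C^2$ and $rX^2$.
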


\begin{proof}
Choose $V\in\R^{n\times r}$ with orthonormal columns such that $Q=VV^\top$.
The result is trivial for $r=1$, so assume $r\ge2$ for the rest of this
section. Let $x_1,\ldots,x_n\in\R^r$ be the rows of $V$. Write $x_i=c_i u_i$,
where $c_i=\norm{x_i}_2\ge0$ and $u_i\in S^{r-1}$ whenever $c_i\ne0$. If
$c_i=0$, choose $u_i$ arbitrarily in $S^{r-1}$. Since the columns of $V$ are
orthonormal, $V^\top V=I_r$. Writing this identity in terms of the rows gives
\begin{equation}\label{eq:projection-model}
\sum_{i=1}^n c_i^2 u_i u_i^\top=I_r 
\qquad\text{and}\qquad
q_{ij}=c_i c_j\inner{u_i}{u_j}
\end{equation}

Set
\[
C:=\sum_{i=1}^n c_i,
\qquad
x:=\sum_{i=1}^n c_i\phi(u_i),
\qquad
X:=\norm{x}_F.
\]

To prove \Cref{thm:projection-constant}, we establish \Cref{lem:CS-ineq,lem:abstract-majorant,lem:scalar-majorant}.

\begin{lemma}\label{lem:CS-ineq}
With the notation above,
\[
C^2+rX^2\le rn.
\]
\end{lemma}

\begin{proof}
\begin{align*}
C^2 + rX^2
&= \sum_{i=1}^n \sum_{j=1}^n c_i c_j + r \sum_{i=1}^n \sum_{j=1}^n c_i c_j \froinner{\phi(u_i)}{\phi(u_j)}\\
&= \sum_{i=1}^n \sum_{j=1}^n c_i c_j + r \sum_{i=1}^n \sum_{j=1}^n c_i c_j \left(\inner{u_i}{u_j}^2 - \frac{1}{r}\right) \text{\qquad by \Cref{lem:psd2}}\\
&= r \sum_{i=1}^n \sum_{j=1}^n c_i c_j \inner{u_i}{u_j}^2\\
&\leq \frac{r}{2}\sum_{i=1}^n \sum_{j=1}^n (c_i^2 + c_j^2) \inner{u_i}{u_j}^2 \text{\qquad by \(2c_i c_j\le c_i^2+c_j^2\)}\\
&= r \sum_{i=1}^n \sum_{j=1}^n c_j^2 \inner{u_i}{u_j}^2 \text{\qquad by symmetry}\\
&= r \sum_{i=1}^n \sum_{j=1}^n \tr(u_i u_i^\top c_j^2 u_j u_j^\top)\\
&= r \sum_{i=1}^n  \tr\left(u_i u_i^\top \sum_{j=1}^n c_j^2 u_j u_j^\top\right)\\
&= r \sum_{i=1}^n  \tr\left(u_i u_i^\top\right) \text{\qquad by \Cref{eq:projection-model}}\\
&= r n.
\qedhere\end{align*}
\end{proof}

We bound \(\|Q\|_1\) by finding a scalar majorant for \(|t|\) on
\([-1,1]\) and applying it with \(t=\inner{u_i}{u_j}\).  We consider
expressions of the form
\[
a+b\,f_2^r(t)-\rho\,f_4^r(t).
\]
After substituting \(t=\inner{u_i}{u_j}\) and summing over \(i,j\), the
constant term produces \(C^2\), the \(f_2^r\)-term produces \(X^2\), and the
sum involving \(f_4^r\) is nonnegative by \Cref{lem:psd4}.  Its coefficient
is \(-\rho\le0\), so dropping this term only increases the upper bound.  The
following lemma formalizes this reduction.

\begin{lemma}\label{lem:abstract-majorant}
Let \(a,b,\rho\in\R\) satisfy
\[
\abs{t}\le a+b\,f_2^r(t)-\rho\,f_4^r(t)
\qquad\text{for all } t\in[-1,1],
\]
with \(a\ge0\), \(\rho\ge 0\), and \(b\le ra\). Then every rank-\(r\) orthogonal projection \(Q\in\R^{n\times n}\) satisfies
\[
\|Q\|_1\le arn.
\]
\end{lemma}

\begin{proof}
By \eqref{eq:projection-model}, write $Q=VV^{\top}$ with rows $c_i u_i$ for
$i\in\{1,\ldots,n\}$. Then $\|Q\|_1=\sum_{i,j}|q_{ij}|=
\sum_{i,j}c_i c_j|\inner{u_i}{u_j}|$. Apply the assumed scalar inequality
with \(t=\inner{u_i}{u_j}\), multiply by $c_i c_j\ge0$, and sum over all
\(i,j\) to obtain
\[
\|Q\|_1 = \sum_{i,j} |q_{ij}|
\le a\sum_{i,j} c_i c_j
+b\sum_{i,j} c_i c_j f_2^r(\inner{u_i}{u_j})
-\rho\sum_{i,j} c_i c_j f_4^r(\inner{u_i}{u_j}).
\]
By definition,
\[
\sum_{i,j} c_i c_j=C^2,
\qquad
\sum_{i,j} c_i c_j f_2^r(\inner{u_i}{u_j}) = \sum_{i,j} c_i c_j \inner{\phi(u_i)}{\phi(u_j)}_F = X^2.
\]
Also, the positive semidefiniteness of \(\bigl(f_4^r(\inner{u_i}{u_j})\bigr)_{i,j=1}^n\) implies that
\[
\sum_{i,j} c_i c_j f_4^r(\inner{u_i}{u_j})\ge 0.
\]
Therefore $\|Q\|_1\le aC^2+bX^2$. Since \(b\le ra\), we have $aC^2+bX^2\le a\left(C^2+rX^2\right)$. Now apply \Cref{lem:CS-ineq} to get $\|Q\|_1\le arn$.
\end{proof}

The bound in \Cref{lem:abstract-majorant} is \(arn\), so we seek coefficients
that minimize \(ar\).  The equiangular tight cases suggest imposing equality at
\(t=1\) and \(t=1/\sqrt{r+2}\). \Cref{lem:scalar-majorant} gives the resulting
coefficients, and the following remark explains their derivation.

\begin{lemma}\label{lem:scalar-majorant}
Let \(r\ge 2\), and set $s:=\sqrt{r+2}$. Define
\[
a_r:=\frac{s^2+s-2}{r(s+1)},\qquad
b_r:=\frac{s(s^2+2s+3)}{2(s+1)^2},\qquad
\rho_r:=\frac{s^3}{2(s+1)^2}.
\]
Then for every \(t\in[-1,1]\),
\[
|t|\le a_r+b_r f_2^r(t)-\rho_r f_4^r(t).
\]
Moreover, we have $b_r\le ra_r$.
\end{lemma}

\begin{proof}
Recall that
\[
f_2^r(t)=t^2-\frac1r,\qquad
f_4^r(t)=t^4-\frac{3}{r(r+2)}.
\]
Since the left-hand side is even in \(t\), it suffices to consider \(t\in[0,1]\).
A direct expansion gives
\begin{equation}\label{eq:direct-expansion-of-hrt}
a_r+b_r f_2^r(t)-\rho_r f_4^r(t)-t
=
\frac{(1-t)(st-1)^2(st+s+2)}{2(s+1)^2}.
\end{equation}
The right-hand side is nonnegative for \(t\in[0,1]\), since
\[
1-t\ge 0,\qquad (st-1)^2\ge 0,\qquad st+s+2>0.
\]
Hence
\[
|t|\le a_r+b_r f_2^r(t)-\rho_r f_4^r(t)
\qquad\text{for all } t\in[-1,1].
\]

Finally,
\[
ra_r-b_r
=
\frac{s^3+2s^2-5s-4}{2(s+1)^2}
=
\frac{(s-2)(s+1)(s+3)+2}{2(s+1)^2}\ge 0,
\]
and therefore \(b_r\le ra_r\).
\end{proof}

\begin{remark}
The factorization in the proof also explains how these coefficients were found.
For the extremal equiangular constructions with
\[
N=\binom{r+1}{2},
\]
the common angle is
\[
\alpha=\frac1{\sqrt{r+2}}=\frac1s.
\]
Since we apply the polynomial majorant to quantities of the form
\[
|t|=|\inner{u_i}{u_j}|,
\]
it is natural to force contact at the endpoint \(t=1\) and at the equiangular
angle \(t=1/s\).  Thus one asks for
\[
H(1)=0,\qquad H(1/s)=0,\qquad H'(1/s)=0,
\]
where
\[
H(t):=a_r+b_r f_2^r(t)-\rho_r f_4^r(t)-t.
\]
These three conditions determine the coefficients above.
\end{remark}

\noindent\textit{Completing the proof of \Cref{thm:projection-constant}}. Apply \Cref{lem:abstract-majorant} with
\[
a=a_r,\qquad b=b_r,\qquad \rho=\rho_r.
\]
\Cref{lem:scalar-majorant} verifies the hypotheses of
\Cref{lem:abstract-majorant}. Therefore
\[
\|Q\|_1\le a_r r n = \frac{s^2 + s -2}{s+1}\, n = \frac{r+\sqrt{r+2}}{1+\sqrt{r+2}}\, n.
\qedhere\]
\end{proof}

The absolute projection constant bound in \Cref{thm:projection-constant} is sharp in dimensions $r\in\{1,2,3,7,23\}$ because extremal real equiangular tight frames are known to exist in these dimensions. Refer to \Cref{sec:weighted-equiangular} for further discussion.

The same slack identity gives a strict improvement in the nonsquare dimensions.
We defer the proof to Appendix~\ref{app:improved-proj-bound}.

\begin{theorem}\label{thm:improved-proj-bound}
Let \(r\ge4\) be even, and suppose that \(r+2\) is not a perfect square. Then,
for every rank-\(r\) orthogonal projection \(Q\in\R^{n\times n}\),
\[
    \|Q\|_1\le
    \left(
    \frac{r+\sqrt{r+2}}{1+\sqrt{r+2}}
    -
    \frac{r}{2(2r)^{4r+8}}
    \right)n.
\]
Consequently,
\[
\gamma(r)\le
\frac{r+\sqrt{r+2}}{1+\sqrt{r+2}}
-
\frac{r}{2(2r)^{4r+8}}.
\]
\end{theorem}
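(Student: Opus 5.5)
We will keep track of the slack that is discarded in the proof of \Cref{thm:projection-constant}. With $H(t):=a_r+b_rf_2^r(t)-\rho_rf_4^r(t)-|t|$ and $Q=VV^\top$ written via \eqref{eq:projection-model}, that proof gives the exact identity
\[
arn-\|Q\|_1=\sum_{i,j}c_ic_jH(\inner{u_i}{u_j})+\rho_r\sum_{i,j}c_ic_jf_4^r(\inner{u_i}{u_j})+(ra_r-b_r)X^2+a_r\bigl(rn-C^2-rX^2\bigr).
\]
All four terms are nonnegative, by \eqref{eq:direct-expansion-of-hrt}, \Cref{lem:psd4}, \Cref{lem:scalar-majorant} and \Cref{lem:CS-ineq}. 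The goal is to show that their sum is at least $\frac{r}{2(2r)^{4r+8}}n$. The defect in \Cref{lem:CS-ineq} equals $\frac r2\sum_{i,j}(c_i-c_j)^2\inner{u_i}{u_j}^2$. It is small only if the $c_i$ are nearly equal on pairs with non-negligible overlap. Since $\sum_i c_i^2=r$, this forces $c_i^2\approx r/n$ and hence $C^2\approx rn$. Then $X$ must be small, and the $X^2$ slack (whose coefficient $ra_r-b_r$ is at least an explicit positive constant) is paid unless the points are nearly in isotropic position.

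The arithmetic input enters through the first term. By \eqref{eq:direct-expansion-of-hrt}, $H(t)=0$ on $[-1,1]$ only at $|t|\in\{1,1/s\}$, and it vanishes quadratically near $|t|=1/s$. To have zero total slack, essentially all weight $c_ic_j$ must therefore sit on pairs with $\inner{u_i}{u_j}\in\{\pm1,\pm1/s\}$. Combined with near-equality in the other terms, the $u_i$ then form, up to sign and repetition, an approximately equiangular tight configuration with angle $1/s$ and $\binom{r+1}{2}$ lines. The Gram matrix $P$ of such an exact configuration has the form $\frac{1}{s}S+I$, where $S$ is a symmetric $\pm1$ matrix. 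Because $P$ is a scaled projection of rank $r$, $S$ has only two eigenvalues. A standard Lemmens--Seidel argument shows that when $s$ is irrational these eigenvalues would be conjugate algebraic integers of equal multiplicity. That forces $\binom{r+1}{2}=2r$, i.e.\ $r=3$, which is excluded here since $r$ is even. We use the parity of $r$ here to make the multiplicity and trace count (trace of $S$ is zero) contradictory. The exact case is therefore impossible.

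The main obstacle is making this quantitative, and we will do it without compactness. Write $\delta$ for the total slack divided by $n$. The plan is to bucket the weights and pass to a finite configuration of at most about $(2r)^{O(r)}$ representative directions. We use $\sum c_i^2=r$ and the bound of order $r^2$ on the number of almost-equiangular lines to control the bucket count. We then show that smallness of $\delta$ makes $\inner{u_i}{u_j}^2$ within $O(\delta^{1/2})$ of $\{1,1/s^2\}$ on the relevant pairs. From this we build an integer symmetric matrix $S$ such that $\det(sI - S\cdot)$, or a suitable characteristic-polynomial expression with integer coefficients and degree at most $2r+4$, is at most $\mathrm{poly}(\delta)$ in absolute value yet must be a nonzero integer divided by a bounded denominator. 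A Liouville-type lower bound for nonzero integer polynomials evaluated at the quadratic irrational $s$ then gives $|\cdot|\ge (2r)^{-(2r+4)}$. Squaring to convert the quadratic vanishing of $H$ back into slack produces the exponent $4r+8$. Dividing the resulting lower bound on $\delta$ by $n$-normalization gives the stated $\frac{r}{2(2r)^{4r+8}}$. We expect this bookkeeping of explicit constants to be the hardest part.
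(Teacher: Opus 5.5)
\textbf{Gap: the quantitative step is missing.} Your starting point, the four-term slack identity, is exactly the paper's \Cref{lem:improved-slack-identity}, and your exact-case analysis is sound. Zero slack does force all $c_i$ equal and a maximal equiangular tight frame with $\binom{r+1}{2}$ lines at angle $1/s$, which the Lemmens--Seidel conjugation argument excludes when $s$ is irrational and $r\ne3$. But the quantitative step is the entire content of the theorem, and it is not there.

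The slack only controls \emph{weighted averages}:
\begin{itemize}
\item bad pairs, with $t_{ij}$ far from both $\alpha$ and $1$, carry small but positive mass;
\item pairs with $t_{ij}$ near $1$ are controlled only through the quartic term, and nearness to $1$ is not transitive;
\item nothing forces $c_i^2\approx r/n$ row by row.
\end{itemize}
``Bucketing the weights'' does not turn this into a finite configuration in which \emph{every} pair is near $\pm1/s$, and you give no mechanism that does.

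Even granting such a configuration, the conjugation argument is global. It needs a robust version of the two-eigenvalue structure (multiplicities $r$ and $\binom{r+1}{2}-r$) of a Seidel matrix of order $\binom{r+1}{2}$. You also assert, rather than prove, that your integer-valued quantity is nonzero. The characteristic polynomial in your setup has degree $\binom{r+1}{2}$, not $2r+4$. So the bound $(2r)^{-(2r+4)}$ and the exponent $4r+8$ are read off the statement rather than derived. Finally, parity plays no role in the Lemmens--Seidel count: it only excludes $r=3$, which $r\ge4$ already does. Your account of where ``$r$ even'' is used is therefore mistaken.

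\textbf{The missing idea: a local cluster suffices.} You never need the full extremal configuration; a cluster of $r+1$ directions is enough. The paper proceeds as follows.
\begin{itemize}
\item Set $e=\eta_r/r$ with $\eta_r=1/(r(2r)^{2r+1})$, and build the graph of pairs with $|t_{ij}-\alpha|\le e/2$.
\item The bound $h_r(t)\ge e^2/16$ away from $\{\alpha,1\}$ controls the bad mass, and the quartic slack controls the near-parallel mass. Together they show this graph has weighted density $>1-1/r$ once $\Delta/C^2<e^2/(128r)$.
\item The weighted Tur\'an (Motzkin--Straus) theorem then yields a $K_{r+1}$ (\Cref{lem:improved-good-pairs,lem:improved-weighted-turan}).
\item Such a near-equiangular $(r+1)$-tuple in $\R^r$ is ruled out by an odd-order Seidel obstruction, and this is exactly where parity enters. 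For a Seidel matrix $S$ of odd order $r+1$, $\chi_S\equiv x(x+1)^r\pmod 2$ is not divisible by $x^2\equiv x^2-d$ when $d=r+2$ is even. Hence $\chi_S(-\sqrt d)\ne0$.
\item The norm bound $|\chi_S(\sqrt d)\chi_S(-\sqrt d)|\ge1$ plus Gershgorin gives $\sigma_{\min}(I+S/\sqrt d)\ge\eta_r$. This survives entrywise perturbations of size $<\eta_r/r$ (\Cref{lem:improved-local-Seidel-gap,cor:improved-no-local-cluster}).
\item The constant in the theorem is exactly $e^2/(128r)=r/(2(2r)^{4r+8})$.
\end{itemize}
If you want to keep your global route, you would need an analogous extraction step, for instance a Motzkin--Straus argument for a much larger clique. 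You would also need the robust two-eigenvalue analysis. Neither appears in your proposal.
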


\section{Lower bounds from weighted two-graph blowups}\label{sec:weighted-lower}

\subsection{Equiangular tight cases}\label{sec:weighted-equiangular}

Suppose there is an extremal equiangular line system in \(\R^r\), so that
\[
N=\binom{r+1}{2}
\]
lines exist.  Choose unit representatives \(u_1,\ldots,u_N\) with
\[
\inner{u_i}{u_j}\in\{\pm \alpha\}\qquad (i\ne j).
\]
It is standard that equality in the relative bound gives an equiangular tight
frame. See, for example, \cite{DGS77,lemmens1973equiangular}.  In particular,
if \(U\in\R^{r\times N}\) has columns \(u_1,\ldots,u_N\), then
\[
UU^\top=\frac Nr I_r,
\]
and extremality forces
\[
\alpha=\frac1{\sqrt{r+2}}.
\]
The corresponding sign matrix is
\[
B_{ii}=1,\qquad
B_{ij}=\operatorname{sign}\inner{u_i}{u_j}\quad (i\ne j).
\]
The Gram matrix \(G:=U^\top U\) therefore has entries
\[
G_{ii}=1,\qquad G_{ij}=\alpha B_{ij}\quad (i\ne j),
\]
or equivalently
\[
G=I_N+\alpha(B-I_N)=\alpha B+(1-\alpha)I_N.
\]
Since \(UU^\top=(N/r)I_r\), the nonzero eigenvalues of \(G=U^\top U\) are $N/r$ with multiplicity \(r\), and the remaining eigenvalues are \(0\).  It follows
that \(B\) has eigenvalue
\[
\theta
:=
\frac{\frac Nr-1+\alpha}{\alpha}
=
\frac{N\beta_r}{r},
\qquad
\beta_r:=\frac{r+\sqrt{r+2}}{1+\sqrt{r+2}},
\]
with multiplicity \(r\), while the remaining eigenvalues are $-(1-\alpha)/\alpha$.

In these cases, no weight optimization is needed.  Let \(Q\) be the orthogonal
projector onto the \(N\beta_r/r\)-eigenspace of \(B\).  Since $G=\alpha B+(1-\alpha)I_N$ has range equal to this eigenspace, we have
\[
Q=\frac rN\,G.
\]
Thus \(Q\) is a rank-\(r\) orthogonal projection and, entrywise,
\[
Q_{ii}=\frac rN>0,\qquad
Q_{ij}=\frac{r\alpha}{N}B_{ij}\quad(i\ne j).
\]
Hence \(\operatorname{sign}(Q)=B\).  Therefore
\[
\frac{\|Q\|_1}{N}
=
\frac{\operatorname{tr}(BQ)}{N}
=
\frac1N\sum_{j=1}^r \lambda_j(B)
=
\beta_r.
\]
Thus the equiangular sign matrix recovers the sharp absolute projection
constant in the dimensions where maximal real equiangular tight frames are
known to exist, namely $r\in\{1,2,3,7,23\}$.

The lines also describe the associated graph directly. Its vertex set is
\[
\{i^+,i^-:1\le i\le N\}.
\]
For \(i\ne j\), if \(\inner{u_i}{u_j}=\alpha\), join
\[
i^+j^+,\qquad i^-j^-,
\]
and if \(\inner{u_i}{u_j}=-\alpha\), join
\[
i^+j^-,\qquad i^-j^+.
\]
This is the regular two-graph construction used in the equiangular-line
examples \cite{Seidel1976SurveyTwoGraphs}.  Let \(G_0\) denote this base graph
on \(2N\) vertices.  The spectrum of this graph is well understood
\cite{Seidel1976SurveyTwoGraphs,GodsilHensel1992}. In particular, its adjacency
matrix satisfies the exact
identity
\[
\lambda_{r+1}(G_0)
=
\frac{\beta_r}{2r}\,|V(G_0)|-1.
\]
Moreover, replacing every vertex of \(G_0\) by a clique of the same size and
every edge by a complete bipartite graph preserves the identity:
\[
\lambda_{r+1}(G)
=
\frac{\beta_r}{2r}\,|V(G)|-1.
\]
Thus these graphs match the upper bound in \Cref{thm:main-explicit} exactly
when \(k=r+1\).
These are precisely the tight cases \(k\in\{2,3,4,8,24\}\) coming from
equiangular lines, in agreement with Linz's lower-bound constructions
\cite{Linz23}.

The extremal equiangular line systems in dimensions \(r=7\) and \(r=23\) are
derived from the \(E_8\) and Leech lattices, respectively
\cite{gillespie2018equiangular}.  These lattices also give the sharp sphere
packings in dimensions \(8\) and \(24\)
\cite{viazovska2017sphere,cohn2017sphere}.

\subsection{Finite weighted cores and blowups}

The equal-weight sign matrix in the preceding subsection suggests the following
generalization.  A \emph{finite weighted core} is a pair \((B,p)\), where
\[
B\in\{\pm1\}^{m\times m},
\qquad B=B^\top,\qquad B_{ii}=1,
\]
and \(p=(p_1,\ldots,p_m)\) satisfies \(p_i\ge0\) and
\(\sum_i p_i=1\).  The indices are called \emph{types}: \(B_{ij}\) prescribes
the sign between types \(i\) and \(j\), and \(p_i\) is the asymptotic
proportion of indices of type \(i\).  For integers \(N_i\) with
\(N=\sum_iN_i\) and \(N_i/N\to p_i\), replacing \(B_{ij}\) by an
\(N_i\times N_j\) constant block produces an \(N\times N\) sign matrix
\(B^{(N)}\), called a \emph{weighted blowup} of the core.  The number \(m\)
remains fixed as \(N\) grows.

The terminology comes from writing a rank-\(r\) projection as \(Q=XX^\top\).
When \(q_{ab}\ne0\), the directions of the \(a\)-th and \(b\)-th rows of
\(X\) determine its sign.  Rows with the same direction up to sign form a
type, and the signs between chosen representatives give the matrix \(B\).
Replacing one representative by its negative multiplies the corresponding row
and column of \(B\) by \(-1\). Replacing several representatives changes \(B\)
to \(\Sigma B\Sigma\), where \(\Sigma\) is diagonal with entries in
\(\{\pm1\}\).  This operation is called \emph{switching}, and a switching class
is a two-graph.  This framework is essentially due to Basso, who formulated
the absolute projection constant through eigenvalue problems for weighted
finite two-graphs \cite{Basso19}.

For a finite weighted core, put
\[
D_p:=\operatorname{diag}(\sqrt{p_1},\ldots,\sqrt{p_m}),
\qquad
K_p:=D_pBD_p.
\]
We may delete coordinates with \(p_i=0\), so we assume below that all weights
are positive.  Define the projection-side objective
\[
\Phi_B(p):=\sum_{j=1}^r\lambda_j(K_p).
\]

\begin{proposition}\label{prop:weighted-core-blowups}
Let \((B,p)\) be a finite weighted core with positive weights, \(m\ge r\), and
\(\lambda_r(K_p)>0\).  Then
\[
\gamma(r)\ge \Phi_B(p).
\]
If also \(\lambda_1(K_p)<1\), then the same core gives
\[
c_{r+1}\ge \frac12\lambda_r(K_p).
\]
\end{proposition}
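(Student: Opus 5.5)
We prove the two claims separately, both through the weighted blowup \(B^{(N)}\) with block sizes \(N_i\approx p_iN\).

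\textbf{Projection constant.} We work with the blowup matrix \(B^{(N)}\). Let \(\Pi\in\R^{N\times m}\) be the normalized block-indicator matrix, whose column \(i\) is \(N_i^{-1/2}\1_{\text{block }i}\); it has orthonormal columns. With \(D_N:=\operatorname{diag}(\sqrt{N_i})\) we have
\[
B^{(N)}=\Pi D_N B D_N\Pi^\top ,
\]
so the nonzero spectrum of \(B^{(N)}\) equals that of \(D_NBD_N=N K_{p^{(N)}}\), where \(p^{(N)}_i=N_i/N\). Let \(W\in\R^{m\times r}\) have orthonormal columns spanning the top-\(r\) eigenspace of \(K_{p^{(N)}}\). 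Set \(Q:=\Pi WW^\top\Pi^\top\), a rank-\(r\) projection in \(\cP_r(N)\).

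Since \(|q_{ab}|\ge \operatorname{sign}\cdot q_{ab}\) for any sign pattern, we get
\[
\|Q\|_1\ \ge\ \tr(B^{(N)}Q)=N\tr(W^\top K_{p^{(N)}}W)=N\,\Phi_B(p^{(N)}).
\]
This is the step that avoids needing \(\operatorname{sign}(Q)=B\). It gives \(\gamma(r)\ge\gamma(N,r)\ge\Phi_B(p^{(N)})\). As \(N\to\infty\), \(p^{(N)}\to p\), and by continuity of eigenvalues \(\Phi_B(p^{(N)})\to\Phi_B(p)\), so \(\gamma(r)\ge\Phi_B(p)\). Here \(m\ge r\) ensures rank \(r\) is attainable. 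The hypothesis \(\lambda_r(K_p)>0\) is not even needed for this part, though it shows the top-\(r\) sum is the positive part.

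\textbf{Graph bound.} We mimic the two-graph construction of the preceding subsection. Take vertices \(i^\pm\), each blown up to a clique of size \(N_i\), for a total of \(2N\) vertices. Blocks \(i^\epsilon\) and \(j^\delta\) with \(i\ne j\) are completely joined iff \(\epsilon\delta=B_{ij}\), and \(i^+\), \(i^-\) are not joined. Write the adjacency matrix as \(A=M-I\), where \(M\) is the block-constant \(0/1\) matrix with ones on the diagonal blocks \(i^\pm i^\pm\).

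Using the basis of symmetric and antisymmetric vectors in the \(\pm\) coordinates, \(M\) splits into two parts. The symmetric part has block entries \(1/2\cdot(J\text{-type})\), giving the all-ones structure \(\tfrac12\) per pair. The antisymmetric part has entries \(\tfrac12 B_{ij}\). More precisely, on block-constant vectors \(M\) acts like \(\tfrac{N}{1}\bigl(K^{J}_{p}\oplus K_p\bigr)\) up to a factor: the even part is \(N D_pJD_p\), with the single eigenvalue \(N\), and the odd part is \(N D_pBD_p=NK_p\). On vectors orthogonal to the block-constant ones, \(M\) acts as \(0\).

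So the spectrum of \(A\) is \(N-1\), the values \(N\lambda_j(K_p)-1\), and \(-1\). Since \(|V|=2N\), the condition \(\lambda_1(K_p)<1\) guarantees that \(N-1\) is the largest eigenvalue. Then \(\lambda_{r+1}(A)=N\lambda_r(K_p)-1=\tfrac12\lambda_r(K_p)|V|-1\), and the assumption \(\lambda_r(K_p)>0\) ensures this exceeds \(-1\), so the ordering is as claimed.

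Letting \(N\to\infty\) with \(N_i/N\to p_i\) (blocks realized with rounded sizes, handled by continuity of eigenvalues), we get \(\lambda_{r+1}(A)/|V|\to\tfrac12\lambda_r(K_p)\). Hence \(c_{r+1}\ge\tfrac12\lambda_r(K_p)\).

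The main obstacle is the careful block-spectrum computation: verifying the factor \(2N\) versus \(N\) normalization, and that \(\lambda_1(K_p)<1\) places \(N-1\) first.
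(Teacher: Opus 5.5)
Your proposal is correct and follows essentially the paper's route: a constant-block blowup with $\|Q\|_1\ge\tr(B^{(N)}Q)$ on the projection side, and the doubled clique blowup of $C_B$ split into symmetric and antisymmetric parts on the graph side. The differences are small improvements. You lift the core's top-$r$ eigenspace via $\Pi W$, so, as you note, $\lambda_r(K_p)>0$ is not needed for the first claim. You also compute the finite spectrum exactly as $N-1$, $N\lambda_j(K_{p^{(N)}})-1$, $-1$ instead of passing to the limit matrix $\widehat A(p)$. In your write-up, replace $p$ by $p^{(N)}$ there and drop the loose ``up to a factor'' phrasing.
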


\begin{lemma}[Constant-block blowups]\label{lem:constant-block-spectrum}
Let \(C\) be a symmetric \(s\times s\) matrix, and let
\(\mathbf n=(n_1,\ldots,n_s)\) be a vector of positive integers with
\(n=\sum_i n_i\).  Form \(C[\mathbf n]\) by replacing the entry \(C_{ij}\)
with an \(n_i\times n_j\) constant block.  Put \(q_i=n_i/n\) and
\(D_q:=\operatorname{diag}(\sqrt{q_1},\ldots,\sqrt{q_s})\).  Then the spectrum
of \(n^{-1}C[\mathbf n]\) consists of the eigenvalues of
\(D_qCD_q\), together with \(n-s\) additional zeros.
\end{lemma}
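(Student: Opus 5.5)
The statement to prove is \Cref{lem:constant-block-spectrum}. The plan is to factor the blown-up matrix through an $s$-dimensional space. Let $E\in\R^{n\times s}$ be the block indicator matrix: column $i$ of $E$ is the $0/1$ indicator of the $i$-th block of indices, which has size $n_i$. By construction $C[\mathbf n]=ECE^\top$, and $E^\top E=\operatorname{diag}(n_1,\ldots,n_s)=nD_q^2$. Put $W:=n^{-1/2}ED_q^{-1}$. This is well defined because every $n_i>0$. Its columns are orthonormal, since
\[
W^\top W=n^{-1}D_q^{-1}(nD_q^2)D_q^{-1}=I_s .
\]

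Next rewrite the rescaled matrix in terms of $W$:
\[
n^{-1}C[\mathbf n]=n^{-1}ECE^\top=(n^{-1/2}ED_q^{-1})(D_qCD_q)(n^{-1/2}ED_q^{-1})^\top=W(D_qCD_q)W^\top .
\]
Extend $W$ to an orthogonal matrix $[W\;W']\in\R^{n\times n}$. In this basis, $n^{-1}C[\mathbf n]$ is orthogonally similar to the block-diagonal matrix $\operatorname{diag}(D_qCD_q,\,0_{n-s})$. Its spectrum is therefore the spectrum of $D_qCD_q$, counted with multiplicity, together with $n-s$ zeros.

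An alternative route is the standard fact that $XY$ and $YX$ have the same nonzero eigenvalues, applied with $X=E$ and $Y=CE^\top$. That approach needs extra bookkeeping to track multiplicities of zero, which the orthonormal change of basis avoids. I expect no real obstacle; the only point needing care is the positivity of each $n_i$, which makes $D_q$ invertible and $W$ an isometry. That hypothesis is given.
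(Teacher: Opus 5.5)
Your proposal is correct and is essentially the paper's proof: the paper factors $C[\mathbf n]=ZCZ^\top$ with the type-incidence matrix $Z$, uses the isometry $U=ZD_{\mathbf n}^{-1}$ (identical to your $W=n^{-1/2}ED_q^{-1}$), and writes $n^{-1}C[\mathbf n]=U(D_qCD_q)U^\top$. The only cosmetic difference is that the paper splits $\mathbb R^n$ into the column space of $U$ and its orthogonal complement and reads off eigenvectors directly, whereas you extend $W$ to an orthogonal matrix and use block-diagonal similarity.
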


\begin{proof}
Partition the indices of \(C[\mathbf n]\) into blocks
\(V_1,\ldots,V_s\), where \(|V_i|=n_i\), and define
\(Z\in\{0,1\}^{n\times s}\) by \(Z_{ai}=1\) if \(a\in V_i\) and \(Z_{ai}=0\)
otherwise.  Thus each row of \(Z\) indicates the type of one blown-up index.
This is the type-incidence matrix.  If
\(D_{\mathbf n}:=\operatorname{diag}(\sqrt{n_1},\ldots,\sqrt{n_s})\), then
\(C[\mathbf n]=ZCZ^\top\) and \(Z^\top Z=D_{\mathbf n}^2\).
Consequently,
\(U:=ZD_{\mathbf n}^{-1}\) has orthonormal columns.  Since
\(D_{\mathbf n}/\sqrt n=D_q\),
\[
\frac1n C[\mathbf n]
=
U\left(\frac{D_{\mathbf n}}{\sqrt n}
       C
       \frac{D_{\mathbf n}}{\sqrt n}\right)U^\top
=U(D_qCD_q)U^\top.
\]
Set \(L:=D_qCD_q\).  Since the columns of \(U\) are orthonormal, every
vector in \(\mathbb R^n\) can be written uniquely as \(Uy+x\), where
\(y\in\mathbb R^s\) and \(x\) is perpendicular to every column of \(U\).
The identities \(U^\top U=I_s\) and \(U^\top x=0\) give
\(n^{-1}C[\mathbf n](Uy+x)=ULU^\top(Uy+x)=ULy\).  In particular, if
\(Ly=\lambda y\), then \(Uy\) is an eigenvector of \(n^{-1}C[\mathbf n]\)
with eigenvalue \(\lambda\), while every nonzero vector perpendicular to the
columns of \(U\) is an eigenvector with eigenvalue zero.  These two subspaces
have dimensions \(s\) and \(n-s\), so these are all the eigenvalues claimed
in the lemma.
\end{proof}

\begin{proof}[Proof of \Cref{prop:weighted-core-blowups}]
\emph{Projection bound.}
Choose positive integers \(N_1,\ldots,N_m\) with
\(N:=\sum_iN_i\to\infty\) and \(N_i/N\to p_i\), and let \(B^{(N)}\) be the
corresponding weighted blowup.  Write
\(q^{(N)}=(N_1/N,\ldots,N_m/N)\).  By
\Cref{lem:constant-block-spectrum}, the spectrum of \(N^{-1}B^{(N)}\) consists
of the eigenvalues of \(D_{q^{(N)}}BD_{q^{(N)}}\), together with additional
zeros.  The former matrix converges to \(K_p\).

Let \(Q_N\) be the orthogonal projector onto a top \(r\)-dimensional
eigenspace of \(B^{(N)}\).  The assumption \(\lambda_r(K_p)>0\) ensures that,
for all sufficiently large \(N\), these \(r\) eigenvalues do not come from the
additional zeros in the lemma.  Hence Ky Fan's principle and continuity of the
ordered eigenvalues give
\[
\frac1N\operatorname{tr}(B^{(N)}Q_N)
=
\sum_{j=1}^r
\lambda_j\left(D_{q^{(N)}}BD_{q^{(N)}}\right)
\longrightarrow
\Phi_B(p).
\]
Since every entry of \(B^{(N)}\) is a sign,
\[
\|Q_N\|_1\ge \operatorname{tr}(B^{(N)}Q_N).
\]
It follows that \(\gamma(r)\ge\Phi_B(p)\).  The last comparison is an equality
whenever \(\operatorname{sign}(Q_N)=B^{(N)}\).

\emph{Graph bound.}
Recall that \(D_p=\operatorname{diag}(\sqrt{p_1},\ldots,\sqrt{p_m})\) and
\(K_p=D_pBD_p\).
Let \(J\) be the \(m\times m\) all-ones matrix.  Index rows and columns by
\(\{i^+,i^-:1\le i\le m\}\), and define
\[
C_B:=
\frac12
\begin{pmatrix}
J+B & J-B\\
J-B & J+B
\end{pmatrix}.
\]
Since \(B_{ii}=1\), we have \(C_B=A+I_{2m}\), where \(A\) is the adjacency
matrix of the simple graph that joins equal signs when \(B_{ij}=1\), joins
opposite signs when \(B_{ij}=-1\), and has no edge \(i^+i^-\).

Choose integers \(t_i=t_i(T)>0\) with \(\sum_i t_i=T\) and \(t_i/T\to p_i\).
Replace \(i^\pm\) by a clique of size \(t_i\), joining two replacement cliques
completely when their base vertices are adjacent.  Let
\(G_M\) be the resulting graph, where \(M=2T\).
Adding \(I_M\) turns each diagonal clique block into an all-ones block.
Thus every block of \(A(G_M)+I_M\) is constant, and its value is the
corresponding entry of \(C_B\).  Applying \Cref{lem:constant-block-spectrum}
to \(C_B\) with these \(2m\) block sizes uses
\(q^{(T)}=M^{-1}(t_1,\ldots,t_m,t_1,\ldots,t_m)\).  Each of its two coordinates
for type \(i\) is \(t_i/M=(t_i/T)/2\to p_i/2\).  Thus the eigenvalues of
\(M^{-1}(A(G_M)+I_M)\) are those of
\(D_{q^{(T)}}C_BD_{q^{(T)}}\), together with \(M-2m\) additional zeros, and the
smaller matrix converges entry by entry to \(\widehat A(p):=H_pC_BH_p\), where
\(H_p:=\operatorname{diag}(D_p/\sqrt2,D_p/\sqrt2)\).
For symmetric matrices of fixed size, Weyl's eigenvalue perturbation inequality
shows that entry-by-entry convergence implies convergence of the ordered
eigenvalues~\cite[Section~4.3]{HornJohnson12}.

Write a vector in \(\mathbb R^{2m}\) as \((u,v)\), where
\(u,v\in\mathbb R^m\).  Setting \(x=(u+v)/2\) and \(z=(u-v)/2\) gives
\((u,v)=(x,x)+(z,-z)\).  The displayed formula for \(C_B\) now gives
\[
\widehat A(p)(x,x)
=\left(\frac12D_pJD_px,\frac12D_pJD_px\right)
\quad\text{and}\quad
\widehat A(p)(z,-z)
=\left(\frac12K_pz,-\frac12K_pz\right).
\]
The matrices \(D_pJD_p/2\) and \(K_p/2\) are symmetric, so choose eigenvector
bases \(x_1,\ldots,x_m\) and \(z_1,\ldots,z_m\) for them.  The calculation
above shows that \((x_i,x_i)\) and \((z_i,-z_i)\), for \(1\le i\le m\), are
eigenvectors of \(\widehat A(p)\) with the corresponding eigenvalues.  These
\(2m\) vectors are linearly independent: any relation among them has the form
\((x,x)+(z,-z)=0\), which gives \(x+z=x-z=0\), hence \(x=z=0\), and then all
coefficients vanish because the \(x_i\) and \(z_i\) are bases.  They therefore
form a basis of \(\mathbb R^{2m}\), so every eigenvalue of \(\widehat A(p)\)
comes from one of these two \(m\times m\) matrices.

Let \(w=(\sqrt{p_1},\ldots,\sqrt{p_m})^\top\).  Since \(J\) is the all-ones
matrix, \(D_pJD_p=ww^\top\), and \(w^\top w=\sum_i p_i=1\).  Thus
\(D_pJD_p/2\) sends \(w\) to \(w/2\) and sends every vector perpendicular to
\(w\) to zero.  Its eigenvalues are therefore \(1/2\) and \(m-1\) zeros, while
\(K_p/2\) has eigenvalues \(\lambda_j(K_p)/2\).  Because
\(\lambda_1(K_p)<1\) and
\(\lambda_r(K_p)>0\), the largest \(r+1\) eigenvalues of \(\widehat A(p)\) are
\(1/2\) and \(\lambda_1(K_p)/2,\ldots,\lambda_r(K_p)/2\).  Hence
\(\lambda_{r+1}(\widehat A(p))=\lambda_r(K_p)/2>0\).  The additional zeros from
\Cref{lem:constant-block-spectrum} lie below the largest \(r+1\) eigenvalues
for all sufficiently large \(M\).  Consequently,
\[
\frac{\lambda_{r+1}(A(G_M)+I_M)}{M}
\longrightarrow
\frac12\lambda_r(K_p).
\]
Finally,
\(\lambda_{r+1}(A(G_M)+I_M)=\lambda_{r+1}(G_M)+1\).  Since
\(M=|V(G_M)|\to\infty\), dividing by \(M\) gives
\[
\frac{\lambda_{r+1}(G_M)}{|V(G_M)|}
\longrightarrow
\frac12\lambda_r(K_p),
\]
and hence \(c_{r+1}\ge\lambda_r(K_p)/2\).
\end{proof}

\subsection{\texorpdfstring{Improved lower bounds for \(c_k\) for small \(k\)}{Improved lower bounds for ck for small k}}

We use deterministic local searches to produce candidate cores for
\Cref{prop:weighted-core-blowups}, optimizing \(\Phi_B(p)\) and
\(\lambda_r(K_p)\) separately.  We check each listed construction directly.
The searches are heuristic: we make no claim of global optimality,
stabilization, minimality, or a uniform bound on the core size.

\Cref{tab:computed-weighted-cores} summarizes the best cores found.  The
projection and graph columns may use different sign matrices and weights.
The quantities \(m_P\) and \(m_G\) denote the respective numbers of positive weights.  The
entries for \(r=4\) and \(r=5\) come from the explicit constructions described
in Appendix~\ref{app:weighted-examples}.  Apart from the tight equiangular
entries already explained, finite-type extraction produced the remaining
numerical weighted-core evidence.  The ancillary
files \texttt{anc/weighted\_cores.json} and
\texttt{anc/verify\_weighted\_cores.py} contain the objective-specific sign
cores and weights and a direct verification script for all rows.  The same
directory also contains the search code.
The table gives the lower bound for \(\gamma(r)\) obtained from the core,
the scaled projection value \(\gamma(r)/(2r)\), the resulting lower bound for
\(c_k\), and the gap between these last two quantities.

\begin{table}[htbp]
\centering
\scriptsize
\setlength{\tabcolsep}{3pt}
\begin{tabular}{c c c c c c c c}
\hline
\(r\) & \(k\) & \(m_P\) & \(m_G\) & \(\gamma(r)\) & \(\gamma(r)/(2r)\) &
\(c_k\) & gap \\
\hline
4 & 5 & 11 & 11 & 1.8500 & 0.2312 & 0.2312 & \(5.51{\cdot}10^{-5}\)\\
5 & 6 & 16 & 16 & 2.0691 & 0.2069 & 0.2069 & 0\\
6 & 7 & 21 & 21 & 2.2857 & 0.1904 & 0.1904 & 0\\
7 & 8 & 28 & 28 & 2.5000 & 0.1785 & 0.1785 & 0\\
8 & 9 & 36 & 36 & 2.6666 & 0.1666 & 0.1666 & 0\\
9 & 10 & 46 & 46 & 2.8202 & 0.1566 & 0.1566 & \(3.69{\cdot}10^{-5}\)\\
10 & 11 & 57 & 57 & 2.9499 & 0.1474 & 0.1473 & \(1.29{\cdot}10^{-4}\)\\
11 & 12 & 68 & 71 & 3.0537 & 0.1388 & 0.1386 & \(1.61{\cdot}10^{-4}\)\\
12 & 13 & 83 & 85 & 3.1687 & 0.1320 & 0.1319 & \(1.28{\cdot}10^{-4}\)\\
13 & 14 & 95 & 105 & 3.2868 & 0.1264 & 0.1262 & \(1.51{\cdot}10^{-4}\)\\
14 & 15 & 114 & 119 & 3.3981 & 0.1213 & 0.1212 & \(1.43{\cdot}10^{-4}\)\\
15 & 16 & 130 & 153 & 3.5106 & 0.1170 & 0.1168 & \(1.59{\cdot}10^{-4}\)\\
16 & 17 & 145 & 152 & 3.6157 & 0.1129 & 0.1128 & \(1.60{\cdot}10^{-4}\)\\
\hline
\end{tabular}
\caption{The indicated independently optimized weighted-core constructions
supply all displayed lower bounds in the \(\gamma(r)\), \(\gamma(r)/(2r)\),
and \(c_k\) columns, with \(k=r+1\).  Here \(m_P\) and
\(m_G\) are the numbers of positive weights in the projection and graph cores.
We truncate each numerical lower bound downward to four decimal places and
compute the gap from the unrounded construction values.}
\label{tab:computed-weighted-cores}
\end{table}

Comparing with Linz's table of lower bounds \cite[Table~1]{Linz23}, these
weighted-core constructions match the values for \(k=7,8,9\), while improving
the known lower bounds for
\[
k\in\{5,6,10,11,12,13,14,15,16,17\}.
\]
In particular, Linz proved \(c_5\ge2/9\) using the Paley graph and
\(c_6\ge0.2\). The constructions in Appendix~\ref{app:weighted-examples} improve both
of these bounds.
For \(r=4\) and \(9\le r\le16\), the computed scaled projection lower bound is
strictly larger than the computed graph lower bound.  Since we optimized the
two objectives independently, this does not imply that the underlying extremal
constants differ.

We outline the first two concrete examples in Appendix~\ref{app:weighted-examples}.

\FloatBarrier
\section{Discussion}\label{sec:discussion}

\subsection{A possible gap between the constants}

Our reduction gives
\[
    c_{r+1}\le \frac{\gamma(r)}{2r}.
\]
For several values of \(r\), the numerical experiments summarized in
\Cref{tab:computed-weighted-cores} produce slightly different best lower bounds
for the two sides, suggesting that this inequality may be strict.  Since the
heuristic local searches produced the constructions and the observed gaps are
small, this does not establish a separation.  It would be interesting to
determine whether \(c_{r+1}<\gamma(r)/(2r)\) for some \(r\), or whether equality
always holds and the gaps in the table are numerical artifacts.

\subsection{Sums of the largest eigenvalues and graph energy}

Nikiforov also discusses a closely related problem involving sums of the
largest eigenvalues \cite[Section~4]{Nikiforov15}.  For a graph \(G\), write
\[
\Lambda_k(G):=\lambda_1(G)+\cdots+\lambda_k(G),
\]
and let
\[
M_k(n):=\max\{\Lambda_k(G): |V(G)|=n\}.
\]
This problem is closely related to the study of Ky Fan norms and graph energy,
where one instead sums the largest singular values. Mohar proved the general upper bound
\[
M_k(n)\le \frac{1+\sqrt{k}}2\,n
\]
and asked how close it is to the truth \cite{Mohar09}. See also
\cite[Section~4]{Nikiforov15}. Nikiforov formulated the following concrete
conjecture~\cite[Conjecture 4.2]{Nikiforov15}: for every fixed \(k\ge2\), if
$\tau_k:=\lim_{n\to\infty}\frac{M_k(n)}n$, then there exists
\(\varepsilon_k>0\) such that
$\tau_k<\frac12\left(1+\sqrt{k}-\varepsilon_k\right)$. The following proposition
resolves this conjecture and proves the stronger finite-\(n\) statement.

\begin{proposition}\label{prop:spectral-sum}
For every graph \(G\) on \(n\) vertices and every \(1\le k\le n\),
\[
\Lambda_k(G)
\le
\frac{1+\gamma(k)}2\,n-k.
\]
Consequently,
\[
M_k(n)
\le
\frac12\left(1+\frac{k+\sqrt{k+2}}{1+\sqrt{k+2}}\right)n-k.
\]
\end{proposition}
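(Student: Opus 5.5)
We mirror the proof of \Cref{thm:matrix-master}, now using Ky Fan's \emph{maximum} principle. For the adjacency matrix $A=A(G)$,
\[
\Lambda_k(G)=\max_{Q\in\cP_k(n)}\tr(AQ).
\]
Write $A+I=:A'$. This matrix is symmetric, has all entries in $[0,1]$, and has unit diagonal. Then $\tr(AQ)=\tr(A'Q)-\tr(Q)=\tr(A'Q)-k$, so it suffices to show $\tr(A'Q)\le \frac{1+\gamma(k)}{2}\,n$ for every $Q\in\cP_k(n)$.

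Fix such a $Q=(q_{ij})$. Because $0\le a'_{ij}\le1$, we have $a'_{ij}q_{ij}\le\max(0,q_{ij})=\tfrac12(q_{ij}+|q_{ij}|)$ entrywise. Summing over $i,j$ gives
\[
\tr(A'Q)\le \frac12\bigl(\1^\top Q\1+\|Q\|_1\bigr).
\]
The two terms are bounded separately:
\begin{itemize}
\item Since $Q$ is an orthogonal projection, $\1^\top Q\1\le\|\1\|_2^2=n$.
\item By the definition of $\gamma(k)$, $\|Q\|_1\le \gamma(n,k)\,n\le\gamma(k)\,n$.
\end{itemize}
Together these give $\tr(A'Q)\le\frac{1+\gamma(k)}2\,n$. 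Taking the maximum over $Q$ yields $\Lambda_k(G)\le\frac{1+\gamma(k)}2 n-k$.

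The edge case $k=1$ is consistent with this argument, because $\gamma(1)=1$ gives $\lambda_1\le n-1$.

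The second displayed inequality follows by substituting the bound $\gamma(k)\le\frac{k+\sqrt{k+2}}{1+\sqrt{k+2}}$ from \Cref{thm:projection-constant} with $r=k$, and then maximizing over graphs on $n$ vertices.

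There is no real obstacle here. The only points needing care are using the maximum form of Ky Fan, which holds for symmetric matrices, and absorbing the diagonal of $Q$ via the shift by $I$, so that the $-k$ term appears. As a remark, which I would add but which is not part of the statement, this settles Nikiforov's conjecture. The coefficient $\frac12\bigl(1+\frac{k+\sqrt{k+2}}{1+\sqrt{k+2}}\bigr)$ is of order $\frac12(1+\sqrt{k}-\Theta(1))$, hence strictly below $\frac12(1+\sqrt k)$ by a fixed margin $\varepsilon_k>0$. A quick check: $\frac{k+s}{1+s}<\sqrt k$ with $s=\sqrt{k+2}$ is equivalent to $k+s<\sqrt k+s\sqrt k$, which holds for $k\ge2$.
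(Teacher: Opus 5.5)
Your proof is correct and is essentially the paper's argument: Ky Fan's maximum principle, the entrywise bound $a_{ij}q_{ij}\le\max(0,q_{ij})=\tfrac12(q_{ij}+|q_{ij}|)$, and the estimates $\1^\top Q\1\le n$ and $\|Q\|_1\le\gamma(k)n$. The only cosmetic difference is that you absorb the diagonal by passing to $A+I$, whereas the paper sums over $i\ne j$ and subtracts $2\tr(Q)=2k$ explicitly.

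One small slip occurs in your closing aside. The margin is $\sqrt{k}-\frac{k+\sqrt{k+2}}{1+\sqrt{k+2}}=\frac{2(\sqrt{k}-1)}{(\sqrt{k+2}+1)(\sqrt{k+2}+\sqrt{k})}$, which is $\Theta(k^{-1/2})$, not $\Theta(1)$. Your direct check still correctly gives a positive $\varepsilon_k$ for each fixed $k\ge2$, and that is all the conjecture needs.
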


\begin{proof}
Let \(A\) be the adjacency matrix of \(G\).  By Ky Fan's maximum principle,
\[
\Lambda_k(G)=\max_{Q\in\cP_k(n)}\tr(AQ).
\]
Fix \(Q=(q_{ij})\in\cP_k(n)\).  Since \(a_{ii}=0\) and \(0\le a_{ij}\le1\)
for \(i\ne j\),
\[
\tr(AQ)
=\sum_{i\ne j}a_{ij}q_{ij}
\le
\sum_{i\ne j}\max(0,q_{ij}).
\]
Using \(\max(0,x)=(x+|x|)/2\), this becomes
\[
\tr(AQ)
\le
\frac12\left(
\sum_{i\ne j}q_{ij}+\sum_{i\ne j}|q_{ij}|
\right)
=
\frac12\left(\1^\top Q\1+\|Q\|_1-2k\right).
\]
Now \(\1^\top Q\1=\|Q\1\|_2^2\le \|\1\|_2^2=n\), while
\(\|Q\|_1\le \gamma(k)n\).  Hence
\[
\tr(AQ)\le \frac{1+\gamma(k)}2\,n-k.
\]
Taking the maximum over \(Q\in\cP_k(n)\) proves the first claim.  The second
claim follows from \Cref{thm:projection-constant}.
\end{proof}

As an immediate consequence, \Cref{prop:spectral-sum} resolves
\cite[Conjecture~4.2]{Nikiforov15}.  Indeed, for every \(k\ge2\), $(k+\sqrt{k+2})/(1+\sqrt{k+2})<\sqrt{k}$, and hence
\[
\tau_k
=
\lim_{n\to\infty}\frac{M_k(n)}n
\le
\frac12\left(1+\frac{k+\sqrt{k+2}}{1+\sqrt{k+2}}\right)
<
\frac12(1+\sqrt{k}).
\]
Thus the conjecture holds with
\[
\varepsilon_k
=
\sqrt{k}
-
\frac{k+\sqrt{k+2}}{1+\sqrt{k+2}}
>0.
\]

This argument also identifies the mixed projection constant
\[
\eta_{\mathrm{mix}}(k):=\sup_{N\ge k}\frac1N
\max_{Q\in\cP_k(N)}\bigl(\|Q\|_1+\1^\top Q\1\bigr),
\]
which is already implicit in the \(k=2\) spectral-sum literature.  The same
proof gives the one-line bound
\[
\Lambda_k(G)\le \frac{\eta_{\mathrm{mix}}(k)}2\,n-k
\qquad\text{with}\qquad
\eta_{\mathrm{mix}}(k)\le 1+\gamma(k).
\]

For \(k=2\), specialized work gives a much sharper answer.  Ebrahimi, Mohar,
Nikiforov, and Ahmady proved
\[
\lambda_1(G)+\lambda_2(G)<\frac{8.0185}{7}\,n
\]
and conjectured the sharp bound \(8n/7\) \cite{EbrahimiMoharNikiforovAhmady08}.
Recently, Kumar, Liu, Monterde, Pragada, and Tait proved this conjecture
using graphon and convex-analytic methods~\cite{KumarLiuMonterdePragadaTait26}:
\[
\lambda_1(G)+\lambda_2(G)\le \frac87\,n.
\]
Agarwal et al. also proved the related Nordhaus--Gaddum
analogue~\cite{AgarwalEtAl26}:
\[
\max_{|V(G)|=n}\bigl(\lambda_1(G)+\lambda_2(\overline G)\bigr)
 =\left(\frac87+o(1)\right)n.
\]
Our general estimate gives only
\[
\lambda_1(G)+\lambda_2(G)\le \frac76\,n-2.
\]
For every \(k>2\), however, the same projection-constant argument improves the
general \((1+\sqrt{k})n/2\) bound in \cite{Mohar09}.

\subsection{Multiplicity of the second eigenvalue}

Assume that \(G\) is connected, and let $m$ be the multiplicity of
\(\lambda_2(G)\). Then
\[
\lambda_2(G)=\lambda_{m+1}(G)\le \alpha_{m+1}n-1.
\]
Consequently,
\[
m\le \max\bigl\{1\le i\le n-1:\ \lambda_2(G)+1\le \alpha_{i+1}n\bigr\}.
\]
Thus \Cref{thm:main-explicit} gives an implicit upper bound on the multiplicity of the second eigenvalue in terms of $\lambda_2(G)$ and $n$.

If \(G=K_n\), then \(\lambda_2(G)=-1\) and \(m=n-1\), so the displayed finite
maximum gives only the trivial bound.  For connected non-complete graphs, we
have \(\lambda_2(G)+1>0\), and since \(1/\alpha_{i+1}=\Theta(\sqrt{i})\), the
same inequality yields
\[
m=O\left(\left(\frac{n}{\lambda_2(G)+1}\right)^2\right).
\]
The asymptotic order already follows from Nikiforov's bound~\cite[Theorem~2.6]{Nikiforov15}, but our estimate is sharper.

\subsection{Sparse graphs}

Our method is tailored to dense graphs, since it treats the complement and uses
only the coarse information $0\le a_{ij}\le 1$.  For sparse graphs, the trace
identity $\sum_{i=1}^n \lambda_i(G)^2=2e(G)$ implies
\[
\lambda_k(G)\le \sqrt{\frac{2e(G)}{k}}.
\]
An open problem is to obtain bounds that interpolate effectively between the
dense regime of \Cref{thm:main-explicit} and sparse or bounded-degree regimes.

\bigskip
\noindent\textbf{Acknowledgments.}
We thank Noga Alon, Matija Buci\'c, Sergio Cristancho, Julien Codsi, Alex Divoux, Clive Elphick, Andrew Lin, Jie Ma, Davin Park, and Shouda Wang for helpful discussions. This paper was written with assistance from GPT-5.4. All theorem statements, proofs, computations, references, and final arguments were independently checked, revised, and finalized by the authors, who take full responsibility for the correctness, originality, and integrity of the paper. The first author was supported by the National Science Foundation under Grant No. DMS-2349013.
% \pagebreak

\appendix
\section{Positivity of the degree-four kernel}\label{app:psd4}

We prove \Cref{lem:psd4} using $4$-tensors. A $4$-tensor
$T=(T_{abcd})_{a,b,c,d=1}^r$ is an array of real numbers with four indices.  
If $T=(T_{abcd})$ and $S=(S_{abcd})$ are two $4$-tensors of the same size, their Frobenius inner product is defined by
\[
\froinner{T}{S}:=\sum_{a,b,c,d=1}^r T_{abcd}S_{abcd}.
\]
This is the natural analogue of the usual Frobenius inner product for matrices.  
For a vector $u=(u_1,\dots,u_r)\in\mathbb R^r$, we write $u^{\otimes 4}$ for the $4$-tensor with entries
\[
(u^{\otimes 4})_{ijkl}:=u_i u_j u_k u_l.
\]
A $4$-tensor is called symmetric if its entries do not change when the indices are permuted.

\subsection*{\texorpdfstring{\Cref{lem:psd4}}{The degree-four kernel}} For $r \geq 1$,
\[
f_4^r(t)=t^4-\frac{3}{r(r+2)}
\]
is positive semidefinite on \(S^{r-1}\).

\begin{proof}
Let \(\Omega=(\Omega_{ijkl})_{1\le i,j,k,l\le r}\) be the symmetric \(4\)-tensor
\[
\Omega_{ijkl}:=\frac{1}{r(r+2)}
\bigl(\delta_{ij}\delta_{kl}+\delta_{ik}\delta_{jl}+\delta_{il}\delta_{jk}\bigr),
\]
where \(\delta_{ab}\) is the Kronecker delta, that is,
\[
\delta_{ab}=
\begin{cases}
1,& a=b,\\
0,& a\neq b.
\end{cases}
\]
For \(u\in S^{r-1}\), define
\[
\psi(u):=u^{\otimes 4}-\Omega.
\]
We claim that for all \(u,v\in S^{r-1}\),
\[
\froinner{\psi(u)}{\psi(v)}=f_4^r(\inner{u}{v}).
\]
Write \(t=\inner{u}{v}\). We compute each term separately.

First,
\[
\froinner{u^{\otimes 4}}{v^{\otimes 4}}
=\sum_{i,j,k,l} u_i u_j u_k u_l\, v_i v_j v_k v_l
=(u^\top v)^4=t^4.
\]

Next,
\begin{align*}
\froinner{u^{\otimes 4}}{\Omega}
&=\frac{1}{r(r+2)}
\sum_{i,j,k,l}u_i u_j u_k u_l
\bigl(\delta_{ij}\delta_{kl}+\delta_{ik}\delta_{jl}+\delta_{il}\delta_{jk}\bigr).
\end{align*}
Each of the three terms contributes
\[
\sum_{i,j,k,l}u_i u_j u_k u_l\,\delta_{ij}\delta_{kl}
=\sum_{i,k}u_i^2u_k^2
=\Bigl(\sum_i u_i^2\Bigr)^2
=1,
\]
so
\[
\froinner{u^{\otimes 4}}{\Omega}=\frac{3}{r(r+2)}.
\]
Similarly,
\[
\froinner{v^{\otimes 4}}{\Omega}=\frac{3}{r(r+2)}.
\]

It remains to compute \(\froinner{\Omega}{\Omega}\). Let
\[
C_{ijkl}:=\delta_{ij}\delta_{kl}+\delta_{ik}\delta_{jl}+\delta_{il}\delta_{jk},
\]
so that \(\Omega=\frac{1}{r(r+2)}C\). Then
\[
\froinner{C}{C}
=\sum_{i,j,k,l} C_{ijkl}^2.
\]
Expanding \(C_{ijkl}^2\), the three square terms contribute
\[
\sum_{i,j,k,l}(\delta_{ij}\delta_{kl})^2
=\sum_{i,j,k,l}\delta_{ij}\delta_{kl}
=r^2,
\]
and likewise for the other two square terms, giving \(3r^2\) in total.

For the mixed terms,
\[
\sum_{i,j,k,l}\delta_{ij}\delta_{kl}\delta_{ik}\delta_{jl}=r,
\]
since all four indices must be equal. The same holds for each of the other five mixed terms. Hence
\[
\froinner{C}{C}=3r^2+6r=3r(r+2),
\]
and therefore
\[
\froinner{\Omega}{\Omega}
=\frac{1}{r^2(r+2)^2}\,\froinner{C}{C}
=\frac{3}{r(r+2)}.
\]

Putting everything together,
\begin{align*}
\froinner{\psi(u)}{\psi(v)}
&=\froinner{u^{\otimes 4}-\Omega}{v^{\otimes 4}-\Omega} \\
&=\froinner{u^{\otimes 4}}{v^{\otimes 4}}
-\froinner{u^{\otimes 4}}{\Omega}
-\froinner{v^{\otimes 4}}{\Omega}
+\froinner{\Omega}{\Omega} \\
&=t^4-\frac{3}{r(r+2)}-\frac{3}{r(r+2)}+\frac{3}{r(r+2)} \\
&=t^4-\frac{3}{r(r+2)} \\
&=f_4^r(t).
\end{align*}

Thus, for any \(u_1,\dots,u_n\in S^{r-1}\), the matrix
\[
\bigl(f_4^r(\inner{u_i}{u_j})\bigr)_{i,j=1}^n
=
\bigl(\froinner{\psi(u_i)}{\psi(u_j)}\bigr)_{i,j=1}^n
\]
is a Gram matrix, hence is positive semidefinite.
\end{proof}

\section{Proof of the improved absolute projection constant bound}
\label{app:improved-proj-bound}

We recall the statement of \Cref{thm:improved-proj-bound}.

\medskip
\noindent\textit{\Cref{thm:improved-proj-bound}.}
Let \(r\ge4\) be even, and suppose that \(r+2\) is not a perfect square. Then,
for every rank-\(r\) orthogonal projection \(Q\in\R^{n\times n}\),
\[
    \|Q\|_1\le
    \left(
    \frac{r+\sqrt{r+2}}{1+\sqrt{r+2}}
    -
    \frac{r}{2(2r)^{4r+8}}
    \right)n.
\]
Consequently,
\[
\gamma(r)\le
\frac{r+\sqrt{r+2}}{1+\sqrt{r+2}}
-
\frac{r}{2(2r)^{4r+8}}.
\]

\medskip

The proof of \Cref{thm:projection-constant} contains more information than the
inequality itself.  In \Cref{lem:abstract-majorant}, after applying the scalar
majorant from \Cref{lem:scalar-majorant}, we discarded several nonnegative
terms.  The goal of this section is to show that, in certain dimensions, those
slack terms cannot all be small.  The scalar slack forces most pairs of row
directions to lie near the equiangular angle, while the degree-four term
prevents too much mass from concentrating on pairs with
\(|\langle u_i,u_j\rangle|\) close to \(1\).

Let
\[
    s:=\sqrt{r+2},
    \qquad
    \alpha:=\frac1s,
    \qquad
    \beta_r:=ra_r=\frac{r+s}{1+s}.
\]
For \(0\le t\le1\), define the scalar slack
\[
    h_r(t):=a_r+b_r f_2^r(t)-\rho_r f_4^r(t)-t.
\]
By \eqref{eq:direct-expansion-of-hrt}, we know that
\begin{equation}\label{eq:improved-hr-factorization}
    h_r(t)=
    \frac{(1-t)(st-1)^2(st+s+2)}{2(s+1)^2}.
\end{equation}
Thus \(h_r\ge0\) on \([0,1]\), and its only zeros are
\[
    t=1
    \qquad\text{and}\qquad
    t=\alpha=\frac1{\sqrt{r+2}}.
\]
The zero at \(\alpha\) is a double zero.  This is the common angle in the
known equiangular examples where the absolute projection constant bound is
sharp.

The proof below formalizes the following stability picture.  If a projection
nearly attains \(\|Q\|_1=\beta_r n\), then most weighted pairs of row directions
must have \(|\langle u_i,u_j\rangle|\) close to \(\alpha\), except possibly for
pairs with \(|\langle u_i,u_j\rangle|\) close to \(1\), which correspond to
repeated directions up to sign.  The scalar slack controls the first kind of bad
pair, and the degree-four term controls the second.  This produces a large
weighted graph of pairs near the angle \(\alpha\).  Weighted Tur\'an forces a
clique of size \(r+1\), but a Seidel-matrix obstruction forbids even an
approximate local clique of this type when \(r+2\) is even and not a perfect
square.

\subsection{Slack in the majorant argument}

We keep the notation from the proof of \Cref{thm:projection-constant}.  Thus
\(Q=VV^\top\), the rows of \(V\) are \(x_i=c_i u_i\), where
\[
    c_i\ge0,
    \qquad
    u_i\in S^{r-1},
    \qquad
    \sum_i c_i^2 u_i u_i^\top=I_r.
\]
Set
\[
    C:=\sum_i c_i,
    \qquad
    X:=\left\|\sum_i c_i\left(u_i u_i^\top-\frac1rI_r\right)\right\|_F,
    \qquad
    t_{ij}:=|\langle u_i,u_j\rangle|.
\]
Recall from \Cref{lem:CS-ineq} that
\begin{equation}\label{eq:improved-CS}
    C^2+rX^2\le rn.
\end{equation}
Also,
\begin{equation}\label{eq:improved-X}
    X^2=\sum_{i,j}c_i c_j\left(t_{ij}^2-\frac1r\right),
    \qquad
    \|Q\|_1=\sum_{i,j}c_i c_j t_{ij}.
\end{equation}

The following identity gives the exact slack left by the proof of
\Cref{thm:projection-constant}.

\begin{lemma}\label{lem:improved-slack-identity}
Assume \(r\ge4\).  Let
\[
    \Delta:=\beta_r n-\|Q\|_1.
\]
Then
\begin{align}\label{eq:improved-Delta-identity}
\Delta
={}&a_r\bigl(rn-C^2-rX^2\bigr)
+(ra_r-b_r)X^2  \\
&\quad
+\rho_r\sum_{i,j}c_i c_j
\left(t_{ij}^4-\frac{3}{r(r+2)}\right)
+\sum_{i,j}c_i c_j h_r(t_{ij}).\nonumber
\end{align}
Each of the four terms on the right-hand side of
\eqref{eq:improved-Delta-identity} is nonnegative.
\end{lemma}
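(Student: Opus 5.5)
The identity \eqref{eq:improved-Delta-identity} is purely algebraic, so I will verify it by expanding the right-hand side and checking that it collapses to $\beta_r n-\|Q\|_1$. Nonnegativity of each term then comes from results already proved.

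\textbf{Step 1: expand the last term.} I start from the definition of $h_r$ on $[0,1]$ and apply it at $t=t_{ij}\in[0,1]$. This gives
\[
\sum_{i,j}c_ic_j h_r(t_{ij})=a_rC^2+b_r\sum_{i,j}c_ic_j\Bigl(t_{ij}^2-\tfrac1r\Bigr)-\rho_r\sum_{i,j}c_ic_j\Bigl(t_{ij}^4-\tfrac{3}{r(r+2)}\Bigr)-\sum_{i,j}c_ic_jt_{ij}.
\]
Since $f_2^r$ and $f_4^r$ are even, $f_2^r(\inner{u_i}{u_j})=f_2^r(t_{ij})$, and the same holds for $f_4^r$. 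So by \eqref{eq:improved-X} the second sum equals $X^2$ and the last sum equals $\|Q\|_1$.

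\textbf{Step 2: collect terms.} I add the remaining three terms of the right-hand side to this expansion:
\begin{itemize}
\item the $\rho_r$ sums cancel exactly;
\item the $b_rX^2$ from Step 1 cancels the $-b_rX^2$ inside $(ra_r-b_r)X^2$;
\item $a_rC^2$ cancels $-a_rC^2$ from the first term;
\item the leftover $ra_rX^2$ cancels $-a_r rX^2$ from the first term.
\end{itemize}
What remains is $a_r rn-\|Q\|_1=\beta_r n-\|Q\|_1=\Delta$, using $\beta_r=ra_r$.

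\textbf{Step 3: nonnegativity.} I check the four terms in order.
\begin{itemize}
\item The first term is nonnegative because $a_r>0$ and $C^2+rX^2\le rn$ by \eqref{eq:improved-CS} (\Cref{lem:CS-ineq}).
\item The second is nonnegative because $ra_r-b_r\ge0$ by \Cref{lem:scalar-majorant}.
\item The third is $\rho_r\,c^\top F c$, where $F=(f_4^r(\inner{u_i}{u_j}))$ is positive semidefinite by \Cref{lem:psd4}, $c=(c_i)$, and $\rho_r\ge0$.
\item The fourth is nonnegative because $c_ic_j\ge0$ and $h_r\ge0$ on $[0,1]$ by the factorization \eqref{eq:improved-hr-factorization}.
\end{itemize}

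There is no real obstacle here. The only points needing care are the replacement of $\inner{u_i}{u_j}$ by $t_{ij}$, which is justified by evenness, and the convention for $u_i$ when $c_i=0$, which is harmless since those terms carry the weight $c_ic_j=0$. The hypothesis $r\ge4$ is not used for the identity itself. I would note that it only ensures the earlier lemmas apply and that $ra_r-b_r$ is in fact strictly positive.
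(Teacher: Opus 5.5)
Your proposal is correct and matches the paper's proof: both substitute $t=t_{ij}$ into the defining relation for $h_r$, weight by $c_ic_j$, and sum. Both then identify the $X^2$ and $\|Q\|_1$ sums via \eqref{eq:improved-X} and rearrange using $\beta_r=ra_r$, and both justify the four signs exactly as the paper does. The only difference is that you cite \Cref{lem:scalar-majorant} for $ra_r-b_r\ge0$ where the paper recomputes it.
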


\begin{proof}
By the definition of \(h_r\),
\[
    t=a_r+b_r f_2^r(t)-\rho_r f_4^r(t)-h_r(t)
    \qquad (0\le t\le1).
\]
Substitute \(t=t_{ij}\), multiply by \(c_i c_j\), and sum over all ordered
pairs.  Using \eqref{eq:improved-X}, we get
\[
\|Q\|_1
=a_rC^2+b_rX^2
-\rho_r\sum_{i,j}c_i c_j\left(t_{ij}^4-\frac{3}{r(r+2)}\right)
-\sum_{i,j}c_i c_jh_r(t_{ij}).
\]
Since \(\beta_r=ra_r\), rearranging gives
\eqref{eq:improved-Delta-identity}.

The first term is nonnegative by \eqref{eq:improved-CS}.  For the second, using
\(s=\sqrt{r+2}\), we have
\[
    ra_r-b_r
    =\frac{s^3+2s^2-5s-4}{2(s+1)^2}
    =\frac{(s-2)(s+1)(s+3)+2}{2(s+1)^2}>0
\]
for \(r\ge4\).  The third term is nonnegative by \Cref{lem:psd4}, and the
fourth term is nonnegative by \eqref{eq:improved-hr-factorization}.
\end{proof}

\subsection{A local Seidel obstruction}

A finite obstruction prevents the near-\(\alpha\) pairs from containing a large
clique.  The use of Seidel matrices to encode
equiangular line systems is standard. See, for example,
\cite{lemmens1973equiangular,Seidel1976SurveyTwoGraphs}.

Suppose first that there were \(r+1\) unit vectors in \(\R^r\) with
\[
    |\langle v_i,v_j\rangle|=\frac1{\sqrt{r+2}}
    \qquad (i\ne j).
\]
Choosing signs for the off-diagonal inner products, their Gram matrix has the
form
\[
    I_{r+1}+\frac1{\sqrt{r+2}}S,
\]
where \(S\) is a Seidel matrix: a symmetric matrix with zero diagonal and
\(\pm1\) off-diagonal entries.  Since \(r+1\) vectors in \(\R^r\) are linearly
dependent, this Gram matrix would be singular.  Equivalently,
\(-\sqrt{r+2}\) would be an eigenvalue of the integer matrix \(S\).

When \(r+2\) is even and not a perfect square, this is impossible.  Indeed,
\(x^2-(r+2)\) would then divide the characteristic polynomial of \(S\), but
modulo \(2\) every Seidel matrix of odd order has characteristic polynomial with
only a simple factor of \(x\).  Since \(x^2-(r+2)\equiv x^2\pmod2\), this gives
a contradiction.  The elementary root-separation argument below gives a
quantitative version of the same obstruction.

For \(m\ge2\), let \(\mathcal S_m\) denote the set of Seidel matrices of order
\(m\), let \(J_m\) denote the \(m\times m\) all-ones matrix, and write
\(\chi_S(x):=\det(xI-S)\) for the characteristic polynomial of \(S\).

\begin{lemma}\label{lem:improved-charpoly-mod2}
Let \(m\) be odd, and let \(S\in\mathcal S_m\).  Then
\[
    \chi_S(x)\equiv x(x+1)^{m-1}\pmod 2.
\]
\end{lemma}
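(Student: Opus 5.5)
The plan is to reduce everything modulo \(2\) and then compute one explicit characteristic polynomial over the integers. The key observation is that a Seidel matrix is congruent entrywise to \(J_m-I_m\) modulo \(2\). Its diagonal entries are \(0\), and its off-diagonal entries are \(\pm1\equiv1\pmod2\). So \(S\equiv J_m-I_m\) entrywise.

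First I will justify transferring this congruence to characteristic polynomials. The map \(\mathbb Z[x]\to\mathbb F_2[x]\) given by reducing coefficients modulo \(2\) is a ring homomorphism. The determinant is a polynomial with integer coefficients in the matrix entries. Hence the reduction of \(\det(xI-S)\) equals the determinant over \(\mathbb F_2[x]\) of the reduced matrix \(xI-\overline S\). Since \(\overline S=\overline{J_m-I_m}\), this gives
\[
\chi_S(x)\equiv \chi_{J_m-I_m}(x)\pmod 2 .
\]

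Next I will compute \(\chi_{J_m-I_m}\) exactly over \(\mathbb Z\). The matrix \(J_m\) has eigenvalue \(m\) with eigenvector \(\1\), and eigenvalue \(0\) with multiplicity \(m-1\) on \(\1^\perp\). It is real symmetric, hence diagonalizable. Therefore \(J_m-I_m\) has eigenvalues \(m-1\) (once) and \(-1\) (with multiplicity \(m-1\)), so
\[
\chi_{J_m-I_m}(x)=(x-(m-1))(x+1)^{m-1}.
\]
This is an identity in \(\mathbb Z[x]\), since both sides are monic integer polynomials with the same roots counted with multiplicity.

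Finally I will use that \(m\) is odd. Then \(m-1\) is even, so \(x-(m-1)\equiv x\pmod2\), and \(\chi_S(x)\equiv x(x+1)^{m-1}\pmod 2\), as claimed.

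There is no real obstacle here. The only point needing care is the first step: reduction modulo \(2\) must be applied to polynomial identities in \(\mathbb Z[x]\), and a congruence of matrix entries only passes to determinants because the determinant is an integer polynomial in those entries.
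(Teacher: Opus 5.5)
Your proof is correct and is essentially the paper's argument: reduce to $S\equiv J_m-I_m \pmod 2$ and compute the characteristic polynomial of $J_m-I_m$. The only difference is that you compute it over $\mathbb Z$ as $(x-(m-1))(x+1)^{m-1}$ and then reduce, whereas the paper works directly over $\mathbb F_2$, using that for odd $m$ the matrix $J_m-I_m$ kills $\1$ and acts as the identity on $\{v:\1^\top v=0\}$.
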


\begin{proof}
Modulo \(2\), every off-diagonal entry of \(S\) is \(1\).  Hence
\[
    S\equiv J_m-I_m\pmod2.
\]
Over \(\mathbb F_2\), since \(m\) is odd, the matrix \(J_m-I_m\) acts as \(0\)
on \(\langle \1\rangle\) and as the identity on the hyperplane
\(\{v:\1^\top v=0\}\).  Therefore its characteristic polynomial over
\(\mathbb F_2\) is \(x(x+1)^{m-1}\).
\end{proof}

We use the following quantitative form of the Seidel obstruction.  For a
real matrix \(A\), let \(\sigma_{\min}(A)\) denote its smallest singular value.

\begin{lemma}\label{lem:improved-local-Seidel-gap}
Let \(r\ge4\), and suppose that \(d:=r+2\) is even and not a perfect square.
Then every \(S\in\mathcal S_{r+1}\) satisfies
\[
\sigma_{\min}\!\left(I_{r+1}+\frac1{\sqrt d}S\right)
\ge
\frac1{r(2r)^{2r+1}}.
\]
\end{lemma}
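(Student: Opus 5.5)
The plan is to turn the singular-value question into a question about the characteristic polynomial of $S$ at $x=-\sqrt d$, and then use integrality together with \Cref{lem:improved-charpoly-mod2} to bound that value away from zero. Put $m:=r+1$, which is odd because $r$ is even. The matrix $M:=I_m+d^{-1/2}S$ is symmetric, so $\sigma_{\min}(M)=\min_j\abs{1+\theta_j/\sqrt d}=d^{-1/2}\min_j\abs{\theta_j+\sqrt d}$, where $\theta_1,\dots,\theta_m$ are the eigenvalues of $S$. It therefore suffices to show that $\min_j\abs{\theta_j+\sqrt d}\ge \sqrt d/(r(2r)^{2r+1})$.

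The key algebraic step concerns the integer polynomial $\chi_S$. Consider $P:=\chi_S(-\sqrt d)\,\chi_S(\sqrt d)$. This is an integer, because it is invariant under $\sqrt d\mapsto-\sqrt d$. In fact $P=\operatorname{Res}(\chi_S,x^2-d)$ up to sign. I claim $P\ne0$. Since $d$ is not a perfect square, $x^2-d$ is irreducible over $\mathbb Q$, so a root $\pm\sqrt d$ of $\chi_S$ would force $x^2-d\mid\chi_S$ in $\mathbb Z[x]$, by Gauss's lemma, both polynomials being monic. Reducing mod $2$, with $d$ even, gives $x^2\mid\chi_S\bmod 2$. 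But by \Cref{lem:improved-charpoly-mod2}, $\chi_S\equiv x(x+1)^{m-1}$, which has $x$ only to the first power. This contradiction gives $\abs{P}\ge1$.

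Next I would make this quantitative. Write $\chi_S(-\sqrt d)=\pm\prod_j(\theta_j+\sqrt d)$ and $\chi_S(\sqrt d)=\pm\prod_j(\sqrt d-\theta_j)$. Every eigenvalue satisfies $\abs{\theta_j}\le\norm{S}_F\le\sqrt{m(m-1)}\le m-1=r$; the crude bound $\abs{\theta_j}\le m-1$ from row sums also works. Hence each factor other than the minimal one is at most $r+\sqrt d\le 2r$, using $\sqrt{r+2}\le r$ for $r\ge4$. From $1\le\abs{P}$ we then get
\[
1\le \min_j\abs{\theta_j+\sqrt d}\,(2r)^{m-1}(2r)^{m}=\min_j\abs{\theta_j+\sqrt d}\,(2r)^{2r+1}.
\]
Thus $\min_j\abs{\theta_j+\sqrt d}\ge(2r)^{-(2r+1)}$, and so $\sigma_{\min}(M)\ge d^{-1/2}(2r)^{-(2r+1)}$. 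Since $\sqrt d\le r$, this is at least $1/(r(2r)^{2r+1})$, which is the stated bound. There is slack throughout, so the exact constants will work out comfortably.

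The main obstacle is the nonvanishing step, namely justifying that $\chi_S(\pm\sqrt d)\ne0$ via the divisibility argument modulo $2$. The root-separation estimate afterwards is routine bookkeeping: bound all other factors crudely and keep only the single smallest one.
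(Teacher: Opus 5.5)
Your proposal is correct and follows the paper's proof essentially step for step: nonvanishing of $\chi_S(\pm\sqrt d)$ via Gauss's lemma and \Cref{lem:improved-charpoly-mod2}, the integrality bound $\abs{\chi_S(\sqrt d)\chi_S(-\sqrt d)}\ge1$, and then bounding every factor except the smallest by $2r$. One small slip: $\norm{S}_F=\sqrt{m(m-1)}$ is strictly larger than $m-1$, so the chain $\sqrt{m(m-1)}\le m-1$ is false as written; the row-sum (Gershgorin) bound $\abs{\theta_j}\le r$ that you also mention is exactly what the paper uses, and it makes the argument correct.
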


\begin{proof}
Since \(d=r+2\) is even, \(r+1\) is odd.  Fix
\(S\in\mathcal S_{r+1}\), and write \(p(x):=\chi_S(x)\).

We claim that \(p(-\sqrt d)\ne0\).  Indeed, suppose that
\(p(-\sqrt d)=0\).  Since \(d\) is not a perfect square, the minimal
polynomial of \(-\sqrt d\) over \(\mathbb Q\) is \(x^2-d\).  Hence
\(x^2-d\) divides \(p(x)\) in \(\mathbb Q[x]\).  Since \(x^2-d\) is monic,
and in particular primitive, Gauss's lemma implies that \(x^2-d\) divides
\(p(x)\) in \(\mathbb Z[x]\).  Reducing modulo \(2\) and using that \(d\) is
even, we get \(x^2\mid p(x)\pmod2\).  But
\Cref{lem:improved-charpoly-mod2} yields
\(p(x)\equiv x(x+1)^r\pmod2\), and the right-hand side is divisible by \(x\)
but not by \(x^2\).  This contradiction proves that
\(p(-\sqrt d)\ne0\).

Since \(p(x)\in\mathbb Z[x]\), we may write
\[
    p(\sqrt d)=u+v\sqrt d
    \qquad\text{for some }u,v\in\mathbb Z.
\]
Then \(p(-\sqrt d)=u-v\sqrt d\).  Moreover, \(p(\sqrt d)\ne0\), since
otherwise conjugation would give \(p(-\sqrt d)=0\).  Hence
\[
    p(\sqrt d)p(-\sqrt d)=u^2-v^2d\in\mathbb Z,
\]
and therefore \(|p(\sqrt d)p(-\sqrt d)|\ge1\).

Let \(\lambda_1,\ldots,\lambda_{r+1}\) be the eigenvalues of \(S\).  Since
\(S\) is a Seidel matrix, every row has absolute row sum \(r\), and so
Gershgorin's theorem~\cite[Theorem~6.1.1]{HornJohnson12} gives
\(|\lambda_i|\le r\).  Therefore
\[
    |p(\sqrt d)|
    =\prod_{i=1}^{r+1}|\sqrt d-\lambda_i|
    \le(\sqrt d+r)^{r+1}.
\]
It follows that \(|p(-\sqrt d)|\ge(\sqrt d+r)^{-(r+1)}\).

Since every factor \(|\sqrt d+\lambda_i|\) is at most \(\sqrt d+r\), we have
\[
    |p(-\sqrt d)|
    =\prod_{i=1}^{r+1}|\sqrt d+\lambda_i|
    \le(\sqrt d+r)^r\min_{1\le i\le r+1}|\sqrt d+\lambda_i|.
\]
Consequently,
\[
    \min_{1\le i\le r+1}|\sqrt d+\lambda_i|
    \ge(\sqrt d+r)^{-(2r+1)}.
\]

Finally,
\[
\sigma_{\min}\!\left(I_{r+1}+\frac1{\sqrt d}S\right)
=
\frac1{\sqrt d}\min_{1\le i\le r+1}|\sqrt d+\lambda_i|
\ge
\frac1{\sqrt d(\sqrt d+r)^{2r+1}}
\ge
\frac1{r(2r)^{2r+1}},
\]
where the last inequality uses \(r\ge4\), so that \(\sqrt d\le r\) and
\(\sqrt d+r\le2r\).
\end{proof}

For the rest of Appendix~\ref{app:improved-proj-bound}, when \(r\) is even and
\(r+2\) is not a perfect square, write
\begin{equation}\label{eq:improved-eta}
    \eta_r:=
    \frac1{r(2r)^{2r+1}}.
\end{equation}
Thus \(\eta_r\) is the explicit lower bound from
\Cref{lem:improved-local-Seidel-gap}.

The local gap rules out approximate local equiangular clusters.

\begin{corollary}
\label{cor:improved-no-local-cluster}
Suppose \(r\ge4\) is even and \(r+2\) is not a perfect square. If
$0<\varepsilon<\eta_r/r$, then there do not exist \(r+1\) unit vectors
\(v_1,\ldots,v_{r+1}\in\R^r\) such that
\[
    \left||\langle v_i,v_j\rangle|-\alpha\right|\le\varepsilon
    \qquad(i\ne j).
\]
\end{corollary}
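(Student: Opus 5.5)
We argue by contradiction, comparing the Gram matrix of a hypothetical near-equiangular cluster with an exactly Seidel-structured matrix. Suppose \(v_1,\ldots,v_{r+1}\in S^{r-1}\) satisfy \(\bigl||\langle v_i,v_j\rangle|-\alpha\bigr|\le\varepsilon\) for \(i\ne j\). Let \(G=(\langle v_i,v_j\rangle)_{i,j=1}^{r+1}\) be their Gram matrix. Since \(r+1\) vectors in \(\R^r\) are linearly dependent, \(G\) is singular, so \(\sigma_{\min}(G)=0\).

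Next we build the comparison Seidel matrix. Note that \(\varepsilon<\eta_r/r<\alpha\): indeed \(\eta_r\) is tiny while \(\alpha=1/\sqrt{r+2}\ge 1/r\) for \(r\ge4\). Hence every off-diagonal inner product is nonzero, and we may set \(S_{ij}:=\operatorname{sign}\langle v_i,v_j\rangle\) for \(i\ne j\) and \(S_{ii}=0\). Then \(S\in\mathcal S_{r+1}\). Put \(M:=I_{r+1}+\alpha S\). The difference \(E:=G-M\) has zero diagonal, and its off-diagonal entries are \(\operatorname{sign}\langle v_i,v_j\rangle\,(|\langle v_i,v_j\rangle|-\alpha)\), so each has absolute value at most \(\varepsilon\).

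Finally we bound the perturbation and conclude. Since \(E\) is symmetric, its operator norm is at most its maximal absolute row sum, which is at most \(r\varepsilon\) (by Gershgorin, or the \(\ell_\infty\) norm bound). By Weyl's perturbation inequality for singular values,
\[
\sigma_{\min}(G)\ge\sigma_{\min}(M)-\|E\|_{2}\ge \eta_r-r\varepsilon>0.
\]
Here the middle step uses \Cref{lem:improved-local-Seidel-gap}, which applies because \(r\ge4\) and \(r+2\) is even and not a perfect square. This contradicts the singularity of \(G\).

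The only delicate points are that the sign pattern is well defined, which needs \(\varepsilon<\alpha\), and that the entrywise error converts to an operator-norm error with the factor \(r\). The latter is exactly what the hypothesis \(\varepsilon<\eta_r/r\) is designed to absorb, so no real obstacle is expected.
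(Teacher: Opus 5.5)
Your proposal is correct and follows essentially the same route as the paper: you compare the singular Gram matrix with $I_{r+1}+\alpha S$, invoke \Cref{lem:improved-local-Seidel-gap}, and absorb the entrywise error $E$ with the factor $r\varepsilon$. The differences are cosmetic. You bound $\|E\|_2$ by the maximal absolute row sum and apply Weyl's singular-value inequality, whereas the paper tests a null vector $z$ of the Gram matrix and bounds $\|Ez\|_2\le r\varepsilon\|z\|_2$ by Cauchy--Schwarz.
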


\begin{proof}
Suppose such vectors exist, and let \(\Gamma\) be their Gram matrix.  For each
\(i\ne j\), choose the sign \(s_{ij}\in\{\pm1\}\) so that
\(s_{ij}\alpha\) has the same sign as \(\langle v_i,v_j\rangle\).  Then
\[
    \Gamma=I_{r+1}+\alpha S+E,
\]
where \(S\) is the Seidel matrix with zero diagonal and off-diagonal entries
\(s_{ij}\), while \(E\) is symmetric with zero diagonal and
\(|E_{ij}|\le\varepsilon\) for \(i\ne j\).

We claim that \(\Gamma\) is nonsingular.  Suppose instead that
\(\Gamma z=0\) for some nonzero \(z\in\R^{r+1}\).  By
\Cref{lem:improved-local-Seidel-gap},
\[
    \bigl\|(I_{r+1}+\alpha S)z\bigr\|_2\ge\eta_r\|z\|_2.
\]
On the other hand, for each \(i\), we have
\(|(Ez)_i|\le\varepsilon\sum_{j\ne i}|z_j|\).  Hence, by the
Cauchy--Schwarz inequality,
\[
\begin{aligned}
    \|Ez\|_2^2
    &\le
    \varepsilon^2\sum_i\left(\sum_{j\ne i}|z_j|\right)^2\\
    &\le
    r\varepsilon^2\sum_i\sum_{j\ne i}z_j^2
    =r^2\varepsilon^2\|z\|_2^2.
\end{aligned}
\]
Since \((I_{r+1}+\alpha S)z=-Ez\), we obtain
\[
    \eta_r\|z\|_2
    \le\|Ez\|_2
    \le r\varepsilon\|z\|_2
    <\eta_r\|z\|_2,
\]
a contradiction.  Thus \(\Gamma\) is nonsingular.  This is impossible, since
\(\Gamma\) is the Gram matrix of \(r+1\) vectors in \(\R^r\) and therefore has
rank at most \(r\).
\end{proof}

\subsection{Small defect forces many good pairs}

The next lemma converts a small normalized defect into a weighted density
estimate for the graph of pairs whose inner products are close to \(\alpha\).

\begin{lemma}\label{lem:improved-good-pairs}
Suppose that \(r\ge4\) is even and that \(r+2\) is not a perfect square.  Set
\[
    e:=\frac{\eta_r}{r}.
\]
Let \(Q\) be a rank-\(r\) projection, with \(c_i\), \(C\), and \(t_{ij}\) as
above, and set \(\Delta:=\beta_r n-\|Q\|_1\).  Let
\(I_+:=\{i:c_i>0\}\), and define \(p_i:=c_i/C\) for \(i\in I_+\).  If
\[
    \frac{\Delta}{C^2}<\frac{e^2}{128r},
\]
then the graph \(H\) with vertex set \(I_+\), in which distinct vertices
\(i,j\) are adjacent exactly when \(|t_{ij}-\alpha|\le e/2\), satisfies
\[
    2\sum_{\{i,j\}\in E(H)}p_i p_j>1-\frac1r.
\]
\end{lemma}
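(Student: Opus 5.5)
The plan is to read off the three nonnegative slack terms of \Cref{lem:improved-slack-identity}. Each is at most \(\Delta\), so after dividing by \(C^2\) each is tiny. I then classify the ordered pairs \((i,j)\in I_+\times I_+\) by the normalized weights \(p_ip_j\), which sum to \(1\) over all ordered pairs, the diagonal included. Only pairs in \(I_+\) matter, since \(c_ic_j=0\) otherwise. Fix a threshold \(\delta\), to be chosen of order \(1/r\) (say \(\delta=1/(4r)\)). Every ordered pair then falls into one of three classes:
\begin{itemize}[leftmargin=*]
\item[(a)] \emph{good pairs:} \(i\ne j\) and \(|t_{ij}-\alpha|\le e/2\);
\item[(b)] \emph{near-parallel pairs:} \(t_{ij}\ge 1-\delta\), which includes every diagonal pair since \(t_{ii}=1\);
\item[(c)] \emph{far pairs:} all remaining pairs, namely \(t_{ij}<1-\delta\) and \(|t_{ij}-\alpha|>e/2\).
\end{itemize}
The mass of the good pairs is exactly \(2\sum_{\{i,j\}\in E(H)}p_ip_j\). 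Hence it suffices to show that the total weight of the near-parallel and far pairs is less than \(1/r\).

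\emph{Far pairs via the scalar slack.} On the far set I bound \(h_r\) from below using the factorization \eqref{eq:improved-hr-factorization}. There \(1-t\ge\delta\), \((st-1)^2\ge s^2e^2/4\), and \(st+s+2\ge s+2\). This gives
\[
h_r(t)\ge \kappa:=\frac{\delta s^2e^2(s+2)}{8(s+1)^2}.
\]
The fourth term of \eqref{eq:improved-Delta-identity} is at most \(\Delta\), so the far mass is at most \(\Delta/(\kappa C^2)\). With \(\delta\sim 1/r\) and \(s^2(s+2)/(s+1)^2\gtrsim s\), the quantity \(\kappa\) is of order \(e^2/\sqrt r\) up to constants. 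The hypothesis \(\Delta/C^2<e^2/(128r)\) should therefore make the far mass a small fraction of the available margin.

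\emph{Near-parallel pairs via the degree-four term.} The third term of \eqref{eq:improved-Delta-identity} gives
\[
\sum_{i,j}p_ip_j t_{ij}^4\le \frac{3}{r(r+2)}+\frac{\Delta}{\rho_rC^2}.
\]
All summands are nonnegative, and near-parallel pairs contribute at least \((1-\delta)^4\) each. So the near-parallel mass is at most
\[
(1-\delta)^{-4}\left(\frac{3}{r(r+2)}+\frac{\Delta}{\rho_rC^2}\right).
\]
With \(\delta=1/(4r)\) we have \((1-\delta)^{-4}\le 1+2/r\), so the main part is at most \(3(r+2)/(r^2(r+2))\cdot\frac{r}{r}\), i.e.\ \(3/r^2\) up to lower-order terms. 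This is well below \(1/r\) for \(r\ge4\), leaving a margin of order \(1/r-3/r^2\ge 1/(4r)\). Since \(\rho_r\gtrsim s/2\) and \(\Delta/C^2\) is astronomically small, the error term fits in this margin, and the far mass from the previous step fits as well.

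The main obstacle is purely quantitative. I must check that the single hypothesis \(\Delta/C^2<e^2/(128r)\) absorbs both error terms, \(\Delta/(\kappa C^2)\) and \((1-\delta)^{-4}\Delta/(\rho_rC^2)\), within the margin \(1/r-(1-\delta)^{-4}\cdot 3/(r(r+2))\). The point to verify is that the \(e^2\) in \(\kappa\) cancels the \(e^2\) in the hypothesis, leaving a constant-times-\(1/r\) bound. This in turn forces the choice of \(\delta\) and the constant \(128\). If the constants are tight, I would adjust \(\delta\) (for instance \(\delta=1/r\)) or use the sharper lower bound \((s+1)^2\) in place of \(s+2\). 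Adding the two bounds then shows that the good mass exceeds \(1-1/r\).
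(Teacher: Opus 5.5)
There is a genuine gap in the far-pair step, and it is exactly the quantitative point you deferred. Your constant $\kappa$ multiplies the worst case of $1-t$ (namely $\delta$) by the worst case of $(st-1)^2$ (namely $s^2e^2/4$). These two worst cases occur at opposite ends of the far set, so the product is far too pessimistic.

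With $\delta=1/(4r)$ and $\mathcal E:=\Delta/C^2<e^2/(128r)$, you only get that the far mass is less than $\mathcal E/\kappa$, which gives the bound $(s+1)^2/(4s^2(s+2))$. This quantity exceeds $1/(4\sqrt{r+2})$. The $e^2$ does cancel. But the factor $1/r$ in the hypothesis is also consumed by $\delta\sim 1/r$, so what remains is of order $r^{-1/2}$, not $r^{-1}$.

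This far bound alone exceeds $1/r$ for $r\ge 18$. For the smaller admissible $r$, it still breaks the budget once added to your near-parallel bound. For example, at $r=4$ you get roughly $0.162+0.111>1/4$. In fact, at $r=4$ no single threshold $\delta$ rescues the argument: the best your two estimates can give is about $1.04/r$.

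Your fallbacks do not help either:
\begin{itemize}
\item With $\delta=1/r$ the far bound is still of order $r^{-1/2}$. At $r=4$ the near-parallel bound becomes $(4/3)^4/8>1/4$.
\item The inequality $st+s+2\ge(s+1)^2$ is false for every $t\le 1$, since $s>1$.
\end{itemize}

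The missing idea is that $1-t$ and $|t-\alpha|$ cannot both be small at the same $t$. The paper uses $h_r(t)\ge(1-t)(t-\alpha)^2$ on $[0,1]$ and splits the far set at $t=(1+\alpha)/2$:
\begin{itemize}
\item below the split, $1-t\ge 1/4$ and $(t-\alpha)^2\ge e^2/4$;
\item above it, $(t-\alpha)^2\ge 1/16$ and $1-t\ge e^2$.
\end{itemize}
The paper takes the near-parallel threshold $\theta=1-e^2$. Your cutoff $1-\delta$ with $\delta=1/(4r)\ge e^2$ works equally well here. Either way, this gives $h_r\ge e^2/16$ on the whole far set, so the far mass is at most $16\mathcal E/e^2<1/(8r)$.

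With that repair, the rest of your plan goes through. Your near-parallel bound $(1-\delta)^{-4}\cdot 3/(r(r+2))\le 3/r^2$, plus a negligible term, added to $1/(8r)$, stays below $1/r$ for all $r\ge 4$. So you do not even need the paper's extra step of crediting the good pairs with $t_{ij}^4\ge(\alpha-e/2)^4$ in the quartic sum.
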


\begin{proof}
From \eqref{eq:improved-eta},
\[
    \frac e\alpha
    =
    \frac{\sqrt{r+2}}{r^2(2r)^{2r+1}}
    \le
    \frac1{r(2r)^{2r+1}}
    <\frac1{100}.
\]
Here we used \(r\ge4\), so that \(\sqrt{r+2}\le r\).  Since also
\(\alpha<1/2\), we have \(\alpha+e/2<1-e^2\).
Put
\[
    \theta:=1-e^2,
    \qquad
    A:=\left(\alpha-\frac e2\right)^4,
    \qquad
    D:=\theta^4-A,
    \qquad
    c:=\frac{3}{r(r+2)}.
\]
Since \(e<\alpha/100\), \(r\ge4\), and \(\alpha^2=1/(r+2)\), we have the crude
estimates
\[
    D\ge \frac45,
    \qquad
    \frac{A}{D}\le\frac1{16},
    \qquad
    \frac{c-A}{D}\le\frac2{3r}.
\]
Indeed, \(A\le\alpha^4\) and
\[
    D\ge 1-4e^2-\alpha^4
    \ge 1-\frac{1}{2500(r+2)}-\frac{1}{(r+2)^2}
    \ge \frac45.
\]
This gives \(A/D\le5/(4(r+2)^2)\le1/16\), and
\((c-A)/D\le 5c/4\le2/(3r)\).

The scalar slack has a uniform lower bound away from its two zeros.  By
\eqref{eq:improved-hr-factorization},
\[
    h_r(t)
    =
    (1-t)(t-\alpha)^2
    \frac{s^2(st+s+2)}{2(s+1)^2}
    \ge
    (1-t)(t-\alpha)^2
    \qquad(0\le t\le1),
\]
because \(s=\sqrt{r+2}\ge\sqrt6\) and
\[
    \frac{s^2(st+s+2)}{2(s+1)^2}
    \ge
    \frac{s^2(s+2)}{2(s+1)^2}
    \ge1.
\]
If \(0\le t\le\theta\) and \(|t-\alpha|\ge e/2\), then either
\(t\le(1+\alpha)/2\), in which case \(1-t\ge(1-\alpha)/2\ge1/4\), or
\(t\ge(1+\alpha)/2\), in which case
\((t-\alpha)^2\ge(1-\alpha)^2/4\ge1/16\) and \(1-t\ge e^2\).
Thus in both cases
\begin{equation}\label{eq:improved-h-lower}
    h_r(t)\ge \frac{e^2}{16}.
\end{equation}
Finally, \(\rho_r=s^3/(2(s+1)^2)\ge1/2\) for \(s=\sqrt{r+2}\ge\sqrt6\).

The weights \((p_i)_{i\in I_+}\) form a probability distribution.  Put
\[
    \mathcal E:=\frac{\Delta}{C^2}.
\]
Partition \(I_+\times I_+\) into
\[
\begin{aligned}
    \mathcal G
    &:=%
    \left\{
        (i,j):
        i\ne j,\
        |t_{ij}-\alpha|\le\frac e2
    \right\},\\
    \mathcal U
    &:=%
    \left\{
        (i,j):
        t_{ij}\ge\theta
    \right\},\\
    \mathcal B
    &:=%
    (I_+\times I_+)\setminus(\mathcal G\cup\mathcal U).
\end{aligned}
\]
The inequality \(\alpha+e/2<\theta\) ensures that
\(\mathcal G\cap\mathcal U=\varnothing\), so these three sets form a
partition.  The set \(\mathcal G\) consists of the good pairs.  Since
\(t_{ii}=1\), all diagonal pairs lie in \(\mathcal U\). Its off-diagonal pairs
correspond to nearly coincident directions up to sign.  The set
\(\mathcal B\) consists of the residual bad pairs, which are close to neither
\(\alpha\) nor \(1\).

Let
\[
    g=\sum_{(i,j)\in\mathcal G}p_i p_j,
    \qquad
    u=\sum_{(i,j)\in\mathcal U}p_i p_j,
    \qquad
    b=\sum_{(i,j)\in\mathcal B}p_i p_j.
\]
Then \(g+u+b=1\).  It therefore suffices to prove \(g>1-1/r\).

Define the normalized slack quantities:
\[
    \mathcal H=\sum_{i,j}p_i p_jh_r(t_{ij}),
    \qquad
    M=\sum_{i,j}p_i p_j
    \left(t_{ij}^4-\frac{3}{r(r+2)}\right).
\]
By \Cref{lem:improved-slack-identity} and the nonnegativity of the remaining
terms, we know that
\[
    \mathcal H+\rho_r M\le\mathcal E.
\]
In particular, \(\mathcal H\le\mathcal E\) and \(M\le2\mathcal E\).

For every \((i,j)\in\mathcal B\), we have \(t_{ij}<\theta\) and
\(|t_{ij}-\alpha|>e/2\).  Hence \eqref{eq:improved-h-lower} gives
\[
    \mathcal H\ge\frac{e^2}{16}b,
\]
and therefore
\begin{equation}\label{eq:improved-b-small}
    b\le \frac{16\mathcal E}{e^2}.
\end{equation}

The quartic term controls the mass in \(\mathcal U\).  On \(\mathcal G\), we
have \(t_{ij}^4\ge A\). On \(\mathcal U\), we have
\(t_{ij}^4\ge\theta^4\). On \(\mathcal B\), we only use \(t_{ij}^4\ge0\).
Thus
\[
    M\ge
    (A-c)g+(\theta^4-c)u-cb.
\]
Using \(g=1-u-b\), we obtain
\[
    M\ge A-c+Du-Ab.
\]
Consequently,
\[
\begin{aligned}
    1-g
    =u+b
    &\le
    \frac{c-A}{D}
    +\frac{M}{D}
    +\left(1+\frac AD\right)b\\
    &\le
    \frac2{3r}
    +\frac{5\mathcal E}{2}
    +\frac{17\mathcal E}{e^2},
\end{aligned}
\]
where we used \(D\ge4/5\), \(A/D\le1/16\), \(M\le2\mathcal E\),
\((c-A)/D\le2/(3r)\), and \eqref{eq:improved-b-small}.
Since \(\mathcal E<e^2/(128r)\) and \(e<1\),
\[
    1-g
    <
    \left(\frac23+\frac{5}{256}+\frac{17}{128}\right)\frac1r
    <\frac1r.
\]
Hence \(g>1-1/r\).  Because every unordered edge of \(H\) contributes the two
ordered pairs \((i,j)\) and \((j,i)\),
\[
    g
    =
    2\sum_{\{i,j\}\in E(H)}p_i p_j.
\]
\end{proof}

The following weighted form of Tur\'an's theorem follows from the
Motzkin--Straus theorem~\cite{MotzkinStraus1965}. See also
\cite[Theorem~1.1]{Talbot2002}.

\begin{lemma}[\cite{MotzkinStraus1965}]\label{lem:improved-weighted-turan}
Let \(H\) be a finite simple graph with vertex set \(I\), and let
\((p_i)_{i\in I}\) be nonnegative weights satisfying
\(\sum_{i\in I}p_i=1\).  If \(H\) is \(K_{r+1}\)-free, then
\[
    2\sum_{\{i,j\}\in E(H)}p_i p_j
    \le1-\frac1r.
\]
\end{lemma}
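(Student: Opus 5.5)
The plan is the standard Motzkin--Straus argument: maximize the quadratic form over the simplex and show that some maximizer is supported on a clique. Write
\[
f(p):=\sum_{\{i,j\}\in E(H)}p_ip_j
\]
on the simplex \(\Delta_I:=\{p\in\R^I: p_i\ge0,\ \sum_i p_i=1\}\). Since \(\Delta_I\) is compact and \(f\) is continuous, \(f\) attains its maximum. Among all maximizers, I will fix one, \(p^*\), whose support \(\operatorname{supp}(p^*)=\{i:p^*_i>0\}\) has the fewest elements. It then suffices to prove \(2f(p^*)\le 1-1/r\).

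\emph{Step 1: the support of \(p^*\) is a clique.} Suppose \(i\ne j\) both lie in the support and are not adjacent in \(H\). Consider \(p(t):=p^*+t(e_i-e_j)\) for \(t\in[-p^*_i,p^*_j]\), which stays in \(\Delta_I\). Because \(\{i,j\}\notin E(H)\), the form \(f\) has no \(p_ip_j\) term, and \(f\) contains no squares, so \(t\mapsto f(p(t))\) is affine:
\[
f(p(t))=f(p^*)+t\bigl(\partial_i f(p^*)-\partial_j f(p^*)\bigr).
\]
The value \(t=0\) is an interior point of the interval, so maximality forces the slope to vanish. Hence \(f\) is constant along the segment. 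Taking the endpoint \(t=p^*_j\) gives another maximizer whose support is strictly smaller, since it loses \(j\). This contradicts the choice of \(p^*\), so any two support vertices are adjacent. This is the only step with real content, and the affine-along-non-edges observation is the key point.

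\emph{Step 2: bound on a clique.} Let the support be a clique \(K\) with \(|K|=k\). Since \(H\) is \(K_{r+1}\)-free, \(k\le r\). On \(K\), every pair is an edge, so
\[
2f(p^*)=\Bigl(\sum_{i\in K}p^*_i\Bigr)^2-\sum_{i\in K}(p^*_i)^2=1-\sum_{i\in K}(p^*_i)^2.
\]
By Cauchy--Schwarz, \(\sum_{i\in K}(p^*_i)^2\ge 1/k\ge 1/r\). Therefore \(2f(p)\le 2f(p^*)\le 1-1/r\) for every \(p\in\Delta_I\), which is the claim. The case of empty \(E(H)\) is trivial and is covered by the same argument with \(k=1\).
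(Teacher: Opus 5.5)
Your proof is correct and is the standard Motzkin--Straus argument: a maximizer of minimal support is supported on a clique because $f$ is affine along $e_i-e_j$ for non-adjacent $i,j$, and Cauchy--Schwarz on that clique of size at most $r$ finishes the bound. The paper gives no proof of its own and simply cites Motzkin--Straus (and Talbot), so your argument is a faithful self-contained version of the cited result.
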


\begin{proof}[Proof of \Cref{thm:improved-proj-bound}]
Fix a rank-\(r\) orthogonal projection \(Q\in\R^{n\times n}\), and set
\(\Delta:=\beta_r n-\|Q\|_1\).  Set \(e:=\eta_r/r\), and suppose for
contradiction that \(\Delta<rn/(2(2r)^{4r+8})\).  By
\eqref{eq:improved-eta},
\[
    \frac{e^2}{128r}
    =
    \frac{1}{128r^5(2r)^{4r+2}}
    =
    \frac{r}{2(2r)^{4r+8}}.
\]
Thus \(\Delta<e^2n/(128r)\).
Hence
\[
    C^2
    \ge\sum_{i,j}c_i c_jt_{ij}
    =\|Q\|_1
    >\left(\beta_r-\frac{e^2}{128r}\right)n
    >n.
\]
Here the last inequality uses
\(\beta_r-1=(r-1)/(1+\sqrt{r+2})>1/2\) and \(e<1\).  Therefore
\[
    \frac{\Delta}{C^2}
    <
    \frac{e^2}{128r}\frac{n}{C^2}
    <
    \frac{e^2}{128r}.
\]

Let \(H\) be the weighted graph from
\Cref{lem:improved-good-pairs}, with vertex weights \(p_i:=c_i/C\) for
\(i\in I_+\).  That lemma gives
\[
    2\sum_{\{i,j\}\in E(H)}p_i p_j
    >
    1-\frac1r.
\]
Applying the contrapositive of \Cref{lem:improved-weighted-turan}, we conclude
that \(H\) contains a copy of \(K_{r+1}\).  Hence there exist distinct indices
\(i_1,\ldots,i_{r+1}\) such that
\[
    \left|
        |\langle u_{i_a},u_{i_b}\rangle|-\alpha
    \right|
    \le
    \frac e2
    =
    \frac{\eta_r}{2r}
    <
    \frac{\eta_r}{r}
    \qquad(a\ne b).
\]
Taking \(\varepsilon=e/2\) contradicts
\Cref{cor:improved-no-local-cluster}.

We therefore have
\[
    \Delta\ge\frac{r}{2(2r)^{4r+8}}n.
\]
Since \(\Delta=\beta_r n-\|Q\|_1\), it follows that
\[
    \|Q\|_1
    \le
    \left(
        \beta_r-\frac{r}{2(2r)^{4r+8}}
    \right)n
    =
    \left(
        \frac{r+\sqrt{r+2}}{1+\sqrt{r+2}}
        -
        \frac{r}{2(2r)^{4r+8}}
    \right)n.
\]
Finally, taking the maximum over all rank-\(r\) projections of each order
\(n\), and then the supremum over \(n\ge r\), gives
\[
    \gamma(r)
    \le
    \frac{r+\sqrt{r+2}}{1+\sqrt{r+2}}
    -
    \frac{r}{2(2r)^{4r+8}}.
\]
\end{proof}

\begin{remark}
The explicit improvement term in \Cref{thm:improved-proj-bound} is
intentionally crude.  The loss comes mainly from the simple estimate
\(|\lambda_i|\le r\) in the Seidel gap.  If one instead
optimizes that root-separation step using the identities
\(\sum_i\lambda_i=0\) and \(\sum_i\lambda_i^2=r(r+1)\), then one can take
\(\eta_r\) of order \((2r)^{-(r+1)}\).  Since the final gap produced by the
argument is of size \(\eta_r^2/r^3\), this would give an improvement of order
\((2r)^{-(2r+5)}\).  We do not pursue this optimization here, since the
resulting constant is still not expected to be sharp and the cruder estimate
keeps the proof short.
\end{remark}

\section{Concrete weighted cores}\label{app:weighted-examples}

We spell out the first two examples with nonuniform weights used in
\Cref{tab:computed-weighted-cores}.  In each example, the sign core \(B\) and
weights \(p\) define \(K_p=D_pBD_p\).  The projection-side value is
\(\Phi_B(p)=\sum_{j=1}^r\lambda_j(K_p)\), while the doubled graph construction
gives the graph lower bound \(\lambda_r(K_p)/2\) after taking integer blowups
with proportions tending to \(p\).  The ancillary files contain the full cores
and a verification script for the displayed values.

For \(r=4\), the finite-type extraction gives an \(11\)-vertex core with an
\(8+1+2\) weight pattern.  The numerical optimization produced this pattern.
It was not imposed.  The projection-side
value below is lower-bound evidence for \(\gamma(4)\), and we do not know
whether the construction gap between it and the graph value persists at the
extremal values.  Let
\[
v=(1,1,1,-1,-1,-1,-1,1)^\top,\qquad
\1_8=(1,\ldots,1)^\top,
\]
and
\[
R=
\begin{pmatrix}
1&1&1&1&1&-1&-1&-1\\
1&1&-1&-1&1&-1&1&1\\
1&-1&1&1&-1&1&-1&1\\
1&-1&1&1&1&1&-1&-1\\
1&1&-1&1&1&-1&1&-1\\
-1&-1&1&1&-1&1&1&1\\
-1&1&-1&-1&1&1&1&1\\
-1&1&1&-1&-1&1&1&1
\end{pmatrix}.
\]
Then
\[
B=
\begin{pmatrix}
R & -\1_8 & v & -v\\
-\1_8^\top & 1 & 1 & 1\\
v^\top & 1 & 1 & -1\\
-v^\top & 1 & -1 & 1
\end{pmatrix},
\qquad
p=(a,\ldots,a,b,c,c),
\quad 8a+b+2c=1.
\]
The projection-side weights found numerically are
\[
a_Q\approx0.0978456601,\qquad
b_Q\approx0.1072573795,\qquad
c_Q\approx0.0549886699,
\]
giving \(\Phi_B(p)\approx1.8500840104\).  On the graph side, the following exact
weights equalize the four positive eigenvalues of \(K_p\):
\[
a_G=\frac{1145-69\sqrt2}{10937},\quad
b_G=\frac{835+284\sqrt2}{10937},\quad
c_G=\frac{471+134\sqrt2}{10937},
\]
and hence
\[
c_5\ge
\frac{1007+1076\sqrt2}{10937}
\approx 0.2312054305.
\]

For \(r=5\), the construction of Der\k{e}gowska, Fickus, Foucart, and
Lewandowska \cite{DFFL22} has a \(10+6\) structure: a \(10\)-vector equiangular
tight frame in \(\R^5\) and a \(6\)-vector equiangular tight frame in \(\R^5\), arranged in a mutually unbiased
way.  The full \(16\times16\) switched sign core used for the computation
appears as \texttt{r5\_dffl\_10\_plus\_6} in the ancillary data.  The
\(6\)-vector piece receives total weight \(p^2\), and the
\(10\)-vector piece receives total weight \(q^2\), where
\[
p^2=\frac{3(8-\sqrt5)}{59},
\qquad
q^2=\frac{35+3\sqrt5}{59},
\]
so the individual class weights are \(p^2/6\) and \(q^2/10\).  The
projection-side value is
\[
\delta=\frac{5(11+6\sqrt5)}{59},
\]
and the doubled graph construction gives
\[
c_6\ge \frac{\delta}{10}
=
\frac{11+6\sqrt5}{118}
\approx0.2069187107.
\]
In this case, the numerical search produces equal lower bounds for the graph
eigenvalue constant and the projection constant after the natural scaling by
\(2r=10\).

\providecommand{\MR}[1]{}
\providecommand{\MRhref}[2]{%
  \href{http://www.ams.org/mathscinet-getitem?mr=#1}{#2}
}

\bibliography{ref}
\bibliographystyle{amsalpha}

\end{document}